\definecolor{dblue}{rgb}{0,0,.6}
\newcommand{\bbZ}{\mathbb{Z}}
\newcommand{\bbQ}{\mathbb{Q}}
\newcommand{\bbR}{\mathbb{R}}
\newcommand{\bbC}{\mathbb{C}}
\newcommand{\bbP}{\mathbb{P}}
\newcommand{\DD}{\mathcal{D}}
\newcommand{\FF}{\mathcal{F}}
\newcommand{\HH}{\mathcal{H}}
\newcommand{\NN}{\mathcal{N}}
\newcommand{\OO}{\mathcal{O}}
\newcommand{\PP}{\mathcal{P}}
\newcommand{\VV}{\mathcal{V}}
\newcommand{\XX}{\mathcal{X}}
\newcommand{\YY}{\mathcal{Y}}
\newcommand{\frh}{\mathfrak{h}}
\newcommand{\frsl}{\mathfrak{sl}}
\newcommand{\frso}{\mathfrak{so}}
\newcommand{\GL}{\mathrm{GL}}
\renewcommand{\ge}{\geqslant}
\renewcommand{\le}{\leqslant}
\newcommand{\st}{\enskip |\enskip}
\newcommand{\sdot}{{\raisebox{0.16ex}{$\scriptscriptstyle\bullet$}}}
\newcommand{\Tr}{\mathrm{Tr}}
\newcommand{\emrp}{\mathrm{End}}
\newcommand{\ii}{i}
\newcommand{\Cl}{\mathcal{C}l}
\newcommand{\Spin}{\mathrm{Spin}}
\newcommand{\lrarr}{\longrightarrow}
\newcommand{\hrarr}{\hookrightarrow}
\newtheorem{defn}{Definition}[section]
\newtheorem{prop}[defn]{Proposition}
\newtheorem{thm}[defn]{Theorem}
\newtheorem{lem}[defn]{Lemma}
\newtheorem{cor}[defn]{Corollary}
\theoremstyle{remark}
  \newtheorem{rem}[defn]{Remark}
\title{Limit mixed Hodge structures of hyperk\"ahler manifolds}
\author{Andrey Soldatenkov}
\address{Institut f\"ur Mathematik, Humboldt-Universit\"at zu Berlin, Unter den Linden 6, 10099 Berlin}
\email{soldatea@hu-berlin.de}
\date{\today}
\subjclass[2010]{primary 14D06, 14D07; secondary 14D05} 
\thanks{The author was supported by the SFB/TR 45 `Periods, Moduli Spaces and Arithmetic of Algebraic Varieties'
of the DFG (German Research Foundation).}
\begin{document}

\begin{abstract}
This note is inspired by the work of Deligne \cite{Del1}. We study limit mixed Hodge structures
of degenerating families of compact hyperk\"ahler manifolds.
We show that when the monodromy action on $H^2$ has maximal index of unipotency,
the limit mixed Hodge structures on all cohomology groups are of Hodge-Tate type.
\end{abstract}

\maketitle

\section{Introduction} 

It is well-known that for the study of mirror symmetry it is important
to consider families of Calabi-Yau varieties with ``maximal degeneration''
at the special fibre. There are several slightly different ways to give
the definition of maximal degeneration (cf. \cite[Definition 3]{Mor}, \cite[Definition 1]{KS}, \cite{Del1}),
not all of them being equivalent to each other. In any of these definitions the
condition is Hodge-theoretic, and concerns the limiting behavior
of the corresponding variation of Hodge structures.
Presumably the strongest condition was suggested by Deligne \cite{Del1}:
a degeneration of Hodge structures is called maximal, if the corresponding limit mixed Hodge
structure is of Hodge-Tate type, i.e. it is an iterated
extension of direct sums of $\bbZ(k)$, $k\in \bbZ$. 

We study projective degenerations of compact simply-connected hyperk\"ahler manifolds
over the unit disc (see Definition \ref{def_deg}). The main result (Theorem \ref{thm_main}) states
the following: if the monodromy operator $\gamma$ acting on $H^2$ is unipotent of maximal index,
i.e. $(\gamma - \mathrm{id})^2 \neq 0$ and $(\gamma - \mathrm{id})^3 = 0$, then the limit
mixed Hodge structures on $H^k$ are of Hodge-Tate type for all $k$. We deduce this result from the
generalized Kuga-Satake construction \cite{KSV}, \cite{SS}. The condition of maximal
unipotency of monodromy is a priory weaker than maximality in the sense of Deligne.
Our result shows that these two notions coincide in the case of hyperk\"ahler manifolds.

The key step for understanding the limit mixed Hodge structures of degenerations is the
description of the monodromy action on the cohomology ring. Using the results
of Verbitsky \cite[Theorem 3.5(iii)]{Ve3}, we show that monodromy
action on the full cohomology ring is essentially determined by its action on $H^2$
(see Proposition \ref{prop_mon}).

In the case of maximal degenerations of compact hyperk\"ahler manifolds,
one can determine the unipotency indices of the monodromy action on $H^k$.
For even $k$ this was done 
in \cite[Proposition 6.18]{KLSV}, see also the paper of Nagai \cite{Nag}
for the general discussion and related results.
We compute the unipotency indices for odd $k$, see Proposition \ref{prop_index}.
This result applies to degenerations of generalized Kummer type manifolds, since
they have non-trivial cohomology groups in odd degrees.

In section \ref{sec_existence} we discuss existence of maximal degenerations, showing that such degenerations
exist in every deformation equivalence class of compact hyperk\"ahler manifolds with $b_2\ge 5$ (Theorem \ref{thm_existence}).
This result has already appeared in the preprint \cite{To}. We provide a simple independent argument,
showing that one can always find a nilpotent orbit (see Definition \ref{def_nilp}) with maximally unipotent monodromy that is
induced by a projective degeneration of hyperk\"ahler manifolds.

\section{Degenerations with maximally unipotent monodromy}

In this section we recall some well-known facts about compact hyperk\"ahler manifolds
and their period domains, for an overview see \cite{Hu1}. We also recall necessary
facts about degenerations and limit mixed Hodge structures.

\subsection{Hyperk\"ahler manifolds}\label{sec_notations}

Recall that a compact K\"ahler manifold $X$ is called simple hyperk\"ahler, or
irreducible holomorphic symplectic (IHS), if it is simply-connected and $H^0(X,\Omega^2_X)$
is spanned by a symplectic form. In what follows we will always assume that
$X$ is simple hyperk\"ahler of complex dimension $2n$.

Let $V_\bbZ = H^2(X,\bbZ)$ and $V = V_\bbZ\otimes \bbQ$. Note that $V_\bbZ$ is torsion-free,
because $X$ is simply-connected. Recall that there exists a non-degenerate
form $q\in S^2V^*$ and a constant $c_X\in \bbQ$, such that for all
$h\in H^2(X,\bbQ)\simeq V$ we have $q(h)^n = c_X h^{2n}$, where we use the cup product in cohomology.
The form $q$ is called Beauville-Bogomolov-Fujiki (BBF) form. We normalize $q$ to
make it integral and primitive on $V_\bbZ$, and such that $q(h) > 0$ for a K\"ahler
class $h$. Then $q$ has signature $(3,b_2(X)-3)$.

Let $\hat{\DD} \subset \bbP(V_\bbC)$ be the quadric defined by $q$, and $\DD =\{x\in \hat{\DD}\st q(x,\bar{x})>0\}$.
Given an element $h\in V_\bbZ$ with $q(h)>0$ we will denote: $V^h = \{v\in V\st q(h,v) = 0\}$,
$\hat{\DD}^h = \hat{\DD}\cap \bbP(V^h_\bbC)$, $\DD^h = \DD\cap \bbP(V^h_\bbC)$.
Then $\hat{\DD}^h$ is the extended period domain and $\DD^h$ is the period domain
for polarized $\bbQ$-Hodge structures of K3 type on $(V^h,q)$.

The group $G_\bbR = \mathrm{O}(V^h_\bbR,q)$ acts transitively on $\DD^h$. After fixing a base point in $\DD$, we get an
isomorphism $\DD^h\simeq G_\bbR/K$, where $K$ is a compact subgroup. Analogously, $G_\bbC = \mathrm{O}(V^h_\bbC,q)$
acts transitively on $\hat{\DD}^h$.  The discrete
group $\mathrm{O}(V^h_\bbZ,q)\subset G_\bbR$ acts on $\DD^h$ properly discontinuously,
and according to Baily-Borel the quotient $\DD^h/\mathrm{O}(V^h_\bbZ,q)$ is a quasi-projective variety.
We can pass to a finite index torsion-free subgroup $\Gamma\subset \mathrm{O}(V^h_\bbZ,q)$, so that
the quotient $\DD^h/\Gamma$ is moreover smooth.

\subsection{Degenerations} Denote: $\Delta = \{z\in\bbC\st |z|<1\}$, $\Delta^* = \Delta\backslash \{0\}$.
Given a morphism $\pi\colon \XX\to \Delta$ and $t\in \Delta$ we write
$\XX_t = \pi^{-1}(t)$.

\begin{defn}\label{def_deg}
A degeneration of $X$ is a flat proper morphism of complex-analytic spaces
$\pi\colon \XX\to \Delta$, such that: $\pi$ is smooth over $\Delta^*$; the fibre $\XX_t$ is
deformation equivalent to $X$ for all $t\in \Delta^*$;
the monodromy action on the second cohomology of $\XX_t$ is unipotent and non-trivial.
The degeneration is called projective, if $\pi$ is a projective morphism.
\end{defn}

\begin{rem} The condition of unipotency is almost automatic: it follows from a theorem of
Borel (see \cite[Lemma 4.5]{Sch}), that monodromy of any family becomes unipotent
after we pass to a finite ramified cover of $\Delta$. Non-triviality of monodromy
excludes the case when $\pi$ is smooth over the whole $\Delta$.
Note that we do not require any of the smooth fibres of $\pi$ to be isomorphic to $X$,
but only deformation-equivalent to it. One may think that our degenerations represent
``boundary points'' of the connected component of the moduli space that contains $X$.
\end{rem}

Let $\pi\colon\XX\to \Delta$ be a projective degeneration of $X$. Denote by $\pi'$
the restriction of $\pi$ to $\pi^{-1}(\Delta^*)$ and consider the local system $\VV=R^2\pi'_*\bbZ$
over $\Delta^*$. Fix a base point $t\in \Delta^*$, identify $H^2(\XX_t,\bbZ)$ with
$V_\bbZ$ and let $h\in V_\bbZ$ be the class of the polarization. Then $\VV$ is a variation
of Hodge structures (VHS) with fibre $V_\bbZ$, $h$ determines a sub-VHS in it, and
$q$ defines a bilinear pairing on $\VV$. Let $\VV^h$ be the $q$-orthogonal complement
of $h$; it is a VHS with fibre $V^h_\bbZ$ polarized by $q$.

Let $\tilde{\Delta} = \{z\in \bbC\st \mathrm{Im}(z)>0\}$ and $\tau\colon \tilde{\Delta}\to\Delta^*$,
$z\mapsto e^{2\pi \ii z}$ be the universal covering. The pull-back $\tau^*\VV^h_\bbZ$ is a trivial
local system and the VHS on it defines a period map $\tilde{\varphi}\colon \tilde{\Delta}\to \DD^h$.
By our definition of degeneration, the monodromy transformation $\gamma\in \mathrm{Aut}(\VV^h,q) \simeq \mathrm{O}(V_\bbZ^h,q)$
is of the form $\gamma = e^N$, where $N\in \frso(V^h,q)$ is nilpotent of index $2$ or $3$.
This restriction on the index of nilpotency follows from the general statement \cite[Theorem 6.1]{Sch}.

\begin{defn}
Degenerations of $X$ with $N$ of index $3$ will be called maximally unipotent.
\end{defn}

\begin{rem} There exists a different terminology, used mainly in the
case of degenerations of K3 surfaces: the degeneration is of ``type II''
and ``type III'' when $N$ has nilpotency index $2$, respectively $3$,
see e.g. \cite{Ku}.
\end{rem}

We recall the results of Schmid \cite{Sch} about limit mixed Hodge structures (MHS).
The period map $\tilde{\varphi}$ satisfies the relation $\tilde{\varphi}(z+1) = \gamma\, \tilde{\varphi}(z)$.
Define the map $\tilde{\psi}\colon \tilde{\Delta}\to \hat{\DD}^h$, $z\mapsto e^{-zN}\tilde{\varphi}(z)$.
Then $\tilde{\psi}(z+1) = \tilde{\psi}(z)$, and $\tilde{\psi}$ descends to a map
$\psi\colon \Delta^*\to \hat{\DD}^h$. According to the nilpotent orbit theorem \cite[Theorem 4.9]{Sch},
$\psi$ extends over the puncture, and the point $\psi(0)$ determines a decreasing
filtration $F^\sdot_{\mathrm{lim}}$ on $V_\bbC^h$. Another filtration $W_\sdot$, increasing and defined over $\bbQ$,
is induced on $V^h$ by the nilpotent operator $N$. It follows from the $SL_2$-orbit theorem
that these two filtrations and the form $q$ determine a polarized mixed Hodge structure on $V^h$, see \cite[Theorem 6.16]{Sch}.

\begin{defn}
A mixed Hodge structure $(U,W_\sdot,F^\sdot)$ is of Hodge-Tate type, if its Hodge numbers
satisfy the condition $h^{p,q} = 0$ for $p\neq q$. Equivalently, $\mathrm{gr}^W_{2p+1}U = 0$,
and $\mathrm{gr}^W_{2p}U$ is a pure Hodge structures of type $(p,p)$ for all $p$.
\end{defn}

It is easy to check (see e.g. \cite{Ku}) that for a maximally unipotent degeneration of $X$
the limit MHS $(V^h,W_\sdot,F^\sdot_{\mathrm{lim}})$ on the second cohomology is of Hodge-Tate
type.

This finishes the discussion of the limit MHS on the second cohomology of $X$. Next, one can apply
the above constructions to higher degree cohomology groups. To study their behavior
under degeneration, we use the relation between Hodge structures on higher cohomology groups and on $H^2$.
This will be explained in the next section.

\section{Limit mixed Hodge structures of maximally unipotent degenerations}

In this section we fix $X$, $V_\bbZ$, $V$ and $q$ as above. We consider a projective
degeneration $\pi\colon\XX\to\Delta$ of $X$, and assume without loss of generality
that $X\simeq \XX_t$ for a fixed base point $t\in\Delta^*$. We let $h\in V_\bbZ$ be
the class of the polarization.

\subsection{The Mukai extension and the mapping class group.}\label{subsec_mukai}
Consider the graded $\bbQ$-vector space $\tilde{V} = \langle e_0\rangle\oplus V\oplus \langle e_4\rangle$,
where $e_i$ is of degree $i$, and $V$ is in degree 2. We introduce on $\tilde{V}$
a quadratic form $\tilde{q}$ that is determined by the following conditions:
$\tilde{q}|_V = q$, $e_0$ and $e_4$ are isotropic and orthogonal to $V$ and span a hyperbolic
plane, so that $\tilde{q}(e_0,e_4) = 1$. We call $(\tilde{V},\tilde{q})$ the Mukai
extension of $(V,q)$.

Consider the graded Lie algebra $\frso(\tilde{V},\tilde{q})$ and denote by $\Xi$
the generator of the orthogonal algebra of $\langle e_0, e_4\rangle$, such that $\Xi e_4 = e_4$, $\Xi e_0 = -e_0$.
Denote by $W$ the Weil operator that induces the Hodge structure on $V$,
i.e. it acts on $V^{p,q}$ as multiplication by $\ii(p-q)$.
It is clear that $\Xi, W\in \frso^0(\tilde{V},\tilde{q})$.

We recall that there exists a representation of graded Lie algebras
$\frso(\tilde{V},\tilde{q})\to \emrp(H^\sdot(X,\bbQ))$, such that:
the action of $\Xi$ induces the cohomological grading on $H^\sdot(X,\bbQ)$;
the action of $W$ induces the Hodge structures on $H^k(X,\bbQ)$ for all $k$.
For the proof we refer to \cite{Ve1}, \cite{LL} or \cite[Theorem A.10]{KSV}.

Recall also, that $\frso(V,q)$ acts on $H^\sdot(X,\bbQ)$ by derivations,
see \cite[Corollary 13.5]{Ve1}. It acts trivially on all Pontryagin classes of $X$,
since the Pontryagin classes stay of Hodge type $(p,p)$ on all deformations of $X$.
Denote by $\mathrm{Aut}^P(X)\subset \mathrm{GL}(H^\sdot(X,\bbQ))$ the group
of algebra automorphisms that fix the Pontryagin classes. We obtain a homomorphism
of algebraic groups $\alpha\colon\Spin(V,q)\to \mathrm{Aut}^P(X)$. Let us
denote by $\mathrm{Aut}^+(X)$ the image of $\alpha$.

It was shown in \cite{HS} (see also \cite{Hu2}), that $\int_X\sqrt{\mathrm{td}(X)}>0$.
Here $\mathrm{td}(X)$ denotes the total Todd class of $X$. Since all odd Chern classes of $X$
vanish, $\mathrm{td}(X)$ can be expressed as a universal polynomial in the Pontryagin classes.
It follows that all elements of $\mathrm{Aut}^P(X)$ act trivially on $H^{4n}(X,\bbQ)$,
where $2n = \mathrm{dim}_\bbC(X)$.

Consider the action of $\mathrm{Aut}^P(X)$ on $H^2(X,\bbQ)$. Note that the form $q$ is uniquely up to a sign
determined by the multiplicative structure of the cohomology ring. It follows from the above discussion
that the action of $\mathrm{Aut}^P(X)$ preserves $q$ (see \cite[Theorem 3.5(i)]{Ve3}).
Hence we have a homomorphism $\beta\colon\mathrm{Aut}^P(X)\to \mathrm{O}(V,q)$.

 We get the following commutative
diagram of algebraic groups, where the maps $\alpha'$ and $\beta'$ are isogenies:

\begin{equation}\label{groups}
\begin{tikzcd}[]
\mathrm{Spin}(V,q) \arrow[two heads]{dr}\arrow[two heads]{r}{\alpha'} &
\mathrm{Aut}^+(X)\arrow[two heads]{d}{\beta'}\arrow[hook]{r} & \mathrm{Aut}^P(X)\dar{\beta} \\
& \mathrm{SO}(V,q) \arrow[hook]{r} & \mathrm{O}(V,q)
\end{tikzcd}
\end{equation}

Given a $\bbQ$-algebraic group $G$ we will denote by $G_\bbQ$ the group of its rational points.

\begin{lem}\label{lem_mc}
Let $\Gamma_A$ be an arithmetic subgroup of $\mathrm{Aut}^P(X)_\bbQ$.
Then $\Gamma_A^+ = \Gamma_A \cap \mathrm{Aut}^+(X)_\bbQ$ is of finite index in $\Gamma_A$.
\end{lem}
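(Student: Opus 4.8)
The plan is to show that $\mathrm{Aut}^+(X)$ is the identity component $\mathrm{Aut}^P(X)^\circ$ of the $\bbQ$-algebraic group $\mathrm{Aut}^P(X)$, and then to invoke the standard fact that any subgroup of the rational points of a $\bbQ$-algebraic group meets the rational points of the identity component in a subgroup of finite index. Concretely, once $\mathrm{Aut}^+(X) = \mathrm{Aut}^P(X)^\circ$ is known, this subgroup is normal, the quotient $Q = \mathrm{Aut}^P(X)/\mathrm{Aut}^+(X)$ is a finite $\bbQ$-group scheme, and reduction modulo $\mathrm{Aut}^+(X)$ defines a homomorphism $\Gamma_A \to Q(\bbQ)$ with kernel exactly $\Gamma_A^+$; hence $\Gamma_A/\Gamma_A^+$ embeds into the finite group $Q(\bbQ)$. (Arithmeticity of $\Gamma_A$ is not actually needed for this — only that it lies inside $\mathrm{Aut}^P(X)_\bbQ$.)

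To identify $\mathrm{Aut}^+(X)$ with $\mathrm{Aut}^P(X)^\circ$ I would argue by dimension. The group $\mathrm{Aut}^+(X)$ is connected, being the image under $\alpha'$ of the connected group $\mathrm{Spin}(V,q)$ in \eqref{groups}, so $\mathrm{Aut}^+(X) \subseteq \mathrm{Aut}^P(X)^\circ$ and it is enough to show the dimensions agree. Since $\beta'$ is an isogeny, $\dim \mathrm{Aut}^+(X) = \dim\mathrm{SO}(V,q) = \dim\mathrm{O}(V,q)$. On the other hand $\beta(\mathrm{Aut}^P(X))$ is a subgroup of $\mathrm{O}(V,q)$ containing $\beta(\mathrm{Aut}^+(X)) = \mathrm{SO}(V,q)$, so it equals $\mathrm{SO}(V,q)$ or $\mathrm{O}(V,q)$, and in either case $\dim\beta(\mathrm{Aut}^P(X)) = \dim\mathrm{O}(V,q)$. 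Applying the dimension formula to $\beta$ gives $\dim\mathrm{Aut}^P(X) = \dim\mathrm{O}(V,q) + \dim\ker\beta$, so the whole lemma reduces to the finiteness of $\ker\beta$.

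Now $\ker\beta$ is the group of automorphisms of the graded $\bbQ$-algebra $H^\sdot(X,\bbQ)$ that fix all Pontryagin classes and restrict to the identity on $H^2(X,\bbQ)$; its finiteness says precisely that such an automorphism is determined up to finite ambiguity by its action on $H^2$, i.e. that the automorphism action on the full cohomology ring is essentially controlled by the action on $H^2$. This is the one non-formal input, and it is exactly Verbitsky's theorem \cite[Theorem 3.5(iii)]{Ve3} (the same ingredient that underlies Proposition \ref{prop_mon}), which I would simply cite; this is where I expect the real content to sit. Granting $\ker\beta$ finite, we get $\dim\mathrm{Aut}^P(X) = \dim\mathrm{Aut}^+(X)$, hence $\mathrm{Aut}^+(X) = \mathrm{Aut}^P(X)^\circ$, and the lemma follows from the first paragraph. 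One subtlety worth flagging: the quotient $Q$ must be formed at the level of algebraic groups, passing to $\bbQ$-points only afterwards — the image of $\mathrm{Aut}^+(X)_\bbQ$ in $\mathrm{SO}(V,q)_\bbQ$ already has infinite index because of the spinor norm, so the argument cannot be run directly inside $\mathrm{O}(V,q)_\bbQ$.
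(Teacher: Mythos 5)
Your reduction is clean, but it hinges on a statement that the cited theorem does not provide: you need $\ker\beta$ to be finite \emph{as an algebraic group} (equivalently, zero-dimensional), whereas Verbitsky's Theorem 3.5(iii) --- and the way it is invoked in this paper --- only gives finiteness of the kernel of $\beta$ restricted to an arithmetic (hence discrete) subgroup of $\mathrm{Aut}^P(X)_\bbQ$. These are genuinely different: a positive-dimensional $\bbQ$-group can have all of its arithmetic subgroups finite (an anisotropic torus, or any $\bbQ$-form with compact real points), so ``$\ker(\beta|_{\Gamma_A})$ is finite for every arithmetic $\Gamma_A$'' does not allow you to conclude $\dim\ker\beta=0$, and your dimension count $\dim\mathrm{Aut}^P(X)=\dim\mathrm{O}(V,q)$ does not go through. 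This is precisely the point the paper flags at the start of its proof: it is not known a priori that $\mathrm{Aut}^+(X)$ has finite index in $\mathrm{Aut}^P(X)$, i.e.\ that it is the full identity component --- which is exactly what your argument would establish if it worked.

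The paper circumvents this by staying entirely at the level of discrete groups: it pushes $\Gamma_A$ and $\Gamma_A^+$ forward to $\Gamma_2=\beta(\Gamma_A)$ and $\Gamma_2^+=\beta'(\Gamma_A^+)$ inside $\mathrm{O}(V,q)_\bbQ$, shows both are arithmetic there (via Borel), hence commensurable, and then pulls the finite-index statement back through $\beta|_{\Gamma_A}$ using only the discrete-group finiteness of its kernel. Your closing remark about the spinor norm is a correct and relevant observation, and the formal part of your argument (finite component group scheme, left-exactness of taking $\bbQ$-points) is fine; the gap is solely in the identification $\mathrm{Aut}^+(X)=\mathrm{Aut}^P(X)^\circ$. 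To salvage your route you would need an independent proof that the scheme-theoretic kernel of $\beta$ is finite, which is not supplied by the cited literature.
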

\begin{proof} We first outline the idea of the proof. We do not know a priory that the subgroup
$\mathrm{Aut}^+(X)$ is of finite index in $\mathrm{Aut}^P(X)$, so we can not prove the statement
directly. We will instead consider the images of $\Gamma_A^+$ and $\Gamma_A$ under the maps $\beta'$
and $\beta$. These images are arithmetic subgroups of $\mathrm{O}(V,q)_\bbQ$, hence
they are commensurable, which is enough to deduce the claim of the lemma.

The group $\Gamma_A^+$ is an arithmetic subgroup of $\mathrm{Aut}^+(X)_\bbQ$, because $\Gamma_A$ is an
arithmetic subgroup of $\mathrm{Aut}^P(X)_\bbQ$.
Let us denote by $\Gamma_2$ and $\Gamma_2^+$ the images of $\Gamma_A$ in $\mathrm{O}(V,q)_\bbQ$
and of $\Gamma_A^+$ in $\mathrm{SO}(V,q)_\bbQ$ respectively. By \cite[Theorem 8.9]{Bo} $\Gamma_2^+$ 
is an arithmetic subgroup of $\mathrm{SO}(V,q)_\bbQ$. It is also an arithmetic subgroup
of $\mathrm{O}(V,q)_\bbQ$, since $\mathrm{SO}(V,q)_\bbQ$ is an index two subgroup in it.
By \cite[Corollary 7.13]{Bo} $\Gamma_2$ is contained in an arithmetic subgroup of $\mathrm{O}(V,q)_\bbQ$.
Since $\Gamma_2$ also contains an arithmetic subgroup $\Gamma_2^+$, it is arithmetic
itself.

We will use the following observation. Let $\phi\colon G_1\to G_2$ be a surjective homomorphism
of groups with finite kernel, and let $H\subset G_1$ be a subgroup. 
If $\phi(H)$ is of finite index in $G_2$, then $H$ is of finite index in $G_1$.
Let us apply the observation to $G_1= \Gamma_A$, $G_2 = \Gamma_2$ and $H = \Gamma_A^+$.
The proof of \cite[Theorem 3.5(iii)]{Ve3} shows that the kernel of $\beta\colon \Gamma_A \to \Gamma_2$ is finite
(note that the cited proof applies to arbitrary arithmetic subgroups of $\mathrm{Aut}^P(X)_\bbQ$).
The subgroup $\beta(\Gamma_A^+) = \Gamma_2^+$ has finite index in $\Gamma_2$, because both are
arithmetic subgroups of $\mathrm{O}(V,q)_\bbQ$. We conclude that $\Gamma_A^+$ is of finite index in $\Gamma_A$.
\end{proof}

\begin{rem} The lemma can be applied to the arithmetic subgroup $\Gamma_A = \mathrm{Aut}^P(X)_\bbZ$
of automorphisms that preserve the integral cohomology classes. Let
$\mathrm{MC}(X) = \mathrm{Diff}(X)/\mathrm{Diff}^0(X)$ be the mapping class group of $X$.
Here $\mathrm{Diff}(X)$ is the group of diffeomorphisms of $X$, and $\mathrm{Diff}^0(X)$ is
the subgroup of diffeomorphisms isotopic to the identity. 
We have a natural homomorphism $\mathrm{MC}(X) \to \mathrm{Aut}^P(X)_\bbZ$.
The above lemma shows that the action of $\mathrm{MC}(X)$ on the cohomology algebra
can essentially be recovered from its action on $H^2$, up to some elements of finite order,
bounded by the index of $\mathrm{Aut}^P(X)_\bbZ^+$ in $\mathrm{Aut}^P(X)_\bbZ$.
\end{rem}

\subsection{The Kuga-Satake construction} We recall the main result of \cite{KSV}.
To a Hodge structure $V$ of K3 type one
can associate the Kuga-Satake Hodge structure of abelian type.
It is constructed as follows. Let $H = \Cl(V,q)$ be the Clifford algebra
and let $v\in V_\bbC$ be the generator of $V^{2,0}$. Define $H^{0,-1}$ to
be the right ideal $vH_\bbC$ (see \cite[Lemma 3.3]{SS}), and let $H^{-1,0} =\overline{H^{0,-1}}$.
One can check that this defines a Hodge structure on $H$. Let $H^h$ be analogously defined
Hodge structure for $V^h$. Then $H\simeq (H^h)^{\oplus 2}$, and
one can check that $H$ is polarized, although the polarization is not canonical.
More precisely, fix a pair of elements $a_1,a_2\in V^h$, such that $q(a_1)>0$,
$q(a_2)>0$, $q(a_1,a_2)=0$. Let $a = a_1a_2\in \Cl(V^h,q)$ and $\omega(x,y) = \Tr(xa\bar{y})$,
where $x,y\in \Cl(V^h,q)$, the map $y\mapsto \bar{y}$ is the canonical anti-involution,
and $\Tr$ is the trace on the Clifford algebra (see e.g. \cite[Proposition 4.2]{KSV}).
Then either $\omega$ or $-\omega$ defines a polarization of $H^h$, moreover $\omega$ is $\Spin(V^h,q)$-invariant.
When we apply the Kuga-Satake construction to a VHS of K3 type, the monodromy operator
lies in $\Spin(V^h,q)$ (see \cite[Section 3.1]{SS}), hence the form $\omega$ is always monodromy-invariant.

Note that $H$ is canonically an $\frso(V,q)$-module, and the Hodge structure on it is induced
by the action of the Weil operator $W$. The following theorem was proved in \cite[Theorem 4.1]{KSV}

\begin{thm}\label{thm_ks}
There exists a structure of graded $\frso(\tilde{V},\tilde{q})$-module on $\Lambda^\sdot H^*$
that extends the canonical $\frso(V,q)$-module structure. Moreover, there exists
an integer $m>0$ and an embedding of $\frso(\tilde{V},\tilde{q})$-modules
\begin{equation}\label{eqn_ks}
H^{\sdot+2n}(X,\bbQ)\hrarr \Lambda^{\sdot+2d}(H^{* \oplus m}),
\end{equation}
where $2n = \dim_\bbC(X)$ and $2d = \frac{1}{2} m \dim_\bbQ(H)$.
In particular, for $i = -2n,\ldots,2n$ we get an embedding
of Hodge structures
$$
H^{i+2n}(X,\bbQ(n))\hrarr \Lambda^{i+2d}(H^{* \oplus m})(d).
$$
\end{thm}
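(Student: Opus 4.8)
\medskip
\noindent\emph{Proof sketch.}
The plan is to view both sides of \eqref{eqn_ks} as finite-dimensional modules over the semisimple graded Lie algebra $\frg:=\frso(\tilde V,\tilde q)$ --- the left-hand side by the representation of graded Lie algebras recalled above, the right-hand side once it has been equipped with such a structure --- to build the embedding by a multiplicity count performed over $\bbC$ and then descended to $\bbQ$, and finally to observe that a $\frg$-equivariant map is automatically compatible with the gradings and with the Hodge structures.

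First I would produce the graded $\frg$-module structure on $\Lambda^\sdot H^*$. Since $(\tilde V,\tilde q)$ is the orthogonal direct sum of $(V,q)$ and the hyperbolic plane $(U,\tilde q|_U)$, $U=\langle e_0,e_4\rangle$, one has $\Cl(\tilde V,\tilde q)\cong\Cl(V,q)\,\hat\otimes\,\Cl(U)$ with $\Cl(U)\cong\End(S_U)$ for a two-dimensional $S_U$ (on which $\Xi$ acts with eigenvalues $\pm\tfrac12$); as $H=\Cl(V,q)$ is a module over itself by left multiplication, $H\otimes S_U$ is a $\Cl(\tilde V,\tilde q)$-module and hence, via $\frg\hookrightarrow\Cl^+(\tilde V,\tilde q)$, a graded $\frg$-module whose restriction to $\frso(V,q)$ is $H^{\oplus 2}$. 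A comparison of dimensions of spin representations shows that already $H$ itself (for $b_2\ge3$, with a small extra argument over $\bbQ$) is the restriction to $\frso(V,q)$ of a $\frg$-module built out of the spin representation(s) $S$ of $\Spin(\tilde V,\tilde q)$, so its canonical $\frso(V,q)$-structure extends to a graded $\frg$-structure; applying the functor $\Lambda^\sdot$ transports this to $\Lambda^\sdot H^*$ and to each $\Lambda^\sdot(H^{*\oplus m})$, graded by the eigenvalues of $\Xi$ with the $\Xi=0$ summand placed in degree $2d=\tfrac12 m\dim_\bbQ H$.

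The heart of the proof is the comparison. Decompose $H^\sdot(X,\bbC)=\bigoplus_i U_i$ into irreducible $\frg_\bbC$-modules; this is one fixed module, so only finitely many $U_i$ occur, each with finite multiplicity. Since $S$ (together with its dual) is a faithful representation of the semisimple $\frg_\bbC$, every irreducible $\frg_\bbC$-module --- in particular each $U_i$, ``even'' or ``odd'' --- occurs in some tensor power of $S$. On the other side, with $H_\bbC$ a sum of copies of $S$ one has $\Lambda^j(H^{*\oplus m}_\bbC)\supseteq(H^*_\bbC)^{\otimes j}$ as soon as $j$ is at most the number of summands (take one vector from $j$ distinct copies), and these tensor powers of $H^*_\bbC$ contain every irreducible of $\frg_\bbC$ with multiplicity growing in $m$; hence for $m$ large $\Lambda^\sdot(H^{*\oplus m}_\bbC)$ contains each $U_i$ with at least the multiplicity it has in $H^\sdot(X,\bbC)$, and semisimplicity yields a $\frg_\bbC$-equivariant embedding $H^\sdot(X,\bbC)\hookrightarrow\Lambda^\sdot(H^{*\oplus m}_\bbC)$. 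Matching $\Xi$-eigenvalues turns this into the graded statement, the shifts $2n$ and $2d$ being exactly the normalisations of the two gradings ($H^k(X)$ sits in $\Xi$-degree $k-2n$).

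It remains to descend to $\bbQ$ and to account for the Hodge structures. Because $\frg_\bbQ$-modules are semisimple and the multiplicities of the irreducibles are stable under $\Gal(\overline\bbQ/\bbQ)$, the base-change identity $\Hom_{\frg_\bbQ}(\,\cdot\,)\otimes_\bbQ\bbC=\Hom_{\frg_\bbC}\!\big(H^\sdot(X,\bbC),\Lambda^\sdot(H^{*\oplus m}_\bbC)\big)$ together with the existence of an injective element on the right force one already over $\bbQ$, giving the embedding of graded $\frg$-modules in \eqref{eqn_ks}; and since $W\in\frso(V,q)\subset\frg$ induces the Hodge structures on every $H^k(X,\bbQ)$ and --- through the Kuga--Satake construction --- on $H$, hence on $\Lambda^\sdot(H^{*\oplus m})$, this $\frg$-equivariant, a fortiori $W$-equivariant, embedding is a morphism of Hodge structures, the twists $(n)$ and $(d)$ being precisely the weight shifts ($H^{i+2n}(X)$ has weight $i+2n$ and $\Lambda^{i+2d}(H^{*\oplus m})$ weight $i+2d$ relative to the weight of $H^*$). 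The step I expect to be the main obstacle is the comparison: proving rigorously that exterior powers of enough copies of the Clifford/spin module exhaust, with the right multiplicities, all irreducible $\frg$-summands of $H^\sdot(X)$, while keeping track of which irreducibles appear and of the half-integral $\Xi$-grading; by comparison the construction of the $\frg$-structure and the rational descent are routine, though they involve some bookkeeping with Clifford algebras over $\bbQ$.
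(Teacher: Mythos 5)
Your overall strategy for the second half (decompose both sides into irreducibles over $\bbC$, compare multiplicities, use semisimplicity and a $\Hom\otimes\bbC$ argument to descend to $\bbQ$, and read off the Hodge compatibility from equivariance under the Weil operator $W\in\frso(V,q)$) is broadly the strategy of the actual proof, which the paper delegates to [KSV, Theorem 4.1]. But the first half of your argument --- the construction of the graded $\frso(\tilde{V},\tilde{q})$-module structure on $\Lambda^\sdot H^*$ --- takes a route that cannot work, and this undermines the rest. You propose to extend the $\frso(V,q)$-structure on $H$ itself to an $\frso(\tilde{V},\tilde{q})$-structure (via $\Cl(\tilde{V},\tilde{q})\cong\Cl(V,q)\,\hat\otimes\,\Cl(U)$ and spin modules) and then apply the functor $\Lambda^\sdot$. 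The problem is the grading: in the theorem, and crucially in the remark following it and in the applications (Proposition \ref{prop_lim_MHS}), $\Xi$ must act on $\Lambda^{k}(H^{*\oplus m})$ as the single scalar $k-2d$, i.e.\ it must induce the cohomological grading of the Kuga--Satake torus. If the structure were $\Lambda^\sdot$ of a structure on $H^*$ in which $\Xi$ has eigenvalues $\pm\tfrac12$, then $\Xi$ would act on $\Lambda^k$ with a whole range of eigenvalues, so the module would not be graded in the required sense and the degree-matched embedding \eqref{eqn_ks} would be meaningless. Relatedly, no $\frso(\tilde{V},\tilde{q})$-structure can live on $H$ alone: the degree $\pm2$ parts of the graded Lie algebra must shift the torus degree by $\pm2$. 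The correct construction (in [KSV]) is of Looijenga--Lunts/Verbitsky type: one produces a family of classes $\omega_v\in\Lambda^2H^*$, $v\in V$ (essentially $\omega_v(x,y)=\Tr(xv\bar y)$), shows they are of Lefschetz type on $\Lambda^\sdot H^*$, and proves that the resulting $\frsl_2$-triples together with the derivation action of $\frso(V,q)$ generate a Lie algebra isomorphic to $\frso(\tilde{V},\tilde{q})$; the degree $\pm2$ operators are cup products with $2$-forms and their duals, which are not obtained by applying $\Lambda^\sdot$ to anything on $H$.

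A second, smaller gap: once the module structure is the correct one, your multiplicity count via ``$\Lambda^j(H^{*\oplus m})\supseteq (H^*)^{\otimes j}$ and tensor powers of a faithful representation exhaust all irreducibles'' is computing with the wrong action (the functorial one on exterior powers rather than the Lefschetz one), and in any case one needs the relevant irreducibles to appear in the correct $\Xi$-degree. What actually makes the comparison work is that the irreducible $\frso(\tilde{V}_\bbC,\tilde{q})$-constituents of $H^\sdot(X,\bbC)$ have highest weights of a very restricted shape (forced by the vanishing of $H^{p,q}$ for $|p-q|$ large, exactly as in Lemmas \ref{lem_H3}--\ref{lem_weight} of this paper), and one verifies by direct computation that these particular highest weights occur, with growing multiplicity in $m$, in the Lefschetz-type module $\Lambda^\sdot(H^{*\oplus m})$. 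So the heart of the proof is not a generic ``faithfulness'' argument but an identification of specific highest weights on both sides; I would encourage you to redo the construction of the module structure along the Lefschetz lines above and then revisit the comparison with that structure in hand.
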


\begin{rem} The shifts in the cohomological grading in the above statement are necessary
to make the grading compatible with the action of the element $\Xi\in \frso(\tilde{V},\tilde{q})$.
The statement about the embedding of Hodge structures includes the appropriate Tate twists.
\end{rem}

\subsection{Main result} We go back to the degeneration $\pi\colon \XX\to \Delta$.
The monodromy acting on $H^2(X,\bbQ)$ is $\gamma = e^N$, where $N\in \frso(V,q)\subset\frso(\tilde{V},\tilde{q})$.
Let us denote by $\delta\in \GL(H^\sdot(X,\bbQ))$ the monodromy operator for the full cohomology
algebra.

\begin{prop}\label{prop_mon}
There exists an integer $k>0$, such that $\delta^k = e^{kN}$, where $e^{kN}$
acts on $H^\sdot(X,\bbQ)$ via the representation $\Spin(V,q)\to \mathrm{Aut}^P(X)$.
\end{prop}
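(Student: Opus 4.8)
The plan is to compare the two monodromy operators, the "intrinsic" one $\delta$ acting on the full cohomology algebra and the "Hodge-theoretic" one $e^N$ coming from the Kuga--Satake/$\frso(\tilde V,\tilde q)$-action, after passing to a suitable power. First I would note that $\delta$ lies in $\mathrm{Aut}^P(X)_\bbZ$: the monodromy of a family acts by ring automorphisms of $H^\sdot(X,\bbZ)$, and it fixes the Pontryagin classes because these stay of type $(p,p)$ along the family (they are flat sections of the local system of algebras, being topological invariants of the fibres). So $\delta \in \Gamma_A := \mathrm{Aut}^P(X)_\bbZ$, an arithmetic subgroup of $\mathrm{Aut}^P(X)_\bbQ$.

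Next I would track $\delta$ through the map $\beta\colon \mathrm{Aut}^P(X)\to \mathrm{O}(V,q)$ of diagram \eqref{groups}. By construction $\beta(\delta)$ is precisely the monodromy operator on $H^2(\XX_t,\bbZ)\simeq V_\bbZ$, which by hypothesis is $\gamma = e^N$ with $N\in\frso(V,q)$; since $N$ has nonzero index, $\gamma\in\mathrm{SO}(V,q)$, i.e. $\beta(\delta)$ lies in the identity component. Now I invoke Lemma \ref{lem_mc}: $\Gamma_A^+ = \Gamma_A\cap\mathrm{Aut}^+(X)_\bbQ$ has finite index in $\Gamma_A$, say index $r$. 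However, $\delta$ itself need not lie in $\Gamma_A^+$, so I cannot conclude immediately that a power of $\delta$ is in the image of $\alpha$. The way around this is: consider the coset $\delta\,\Gamma_A^+$ in the finite group $\Gamma_A/\Gamma_A^+$ — wait, $\Gamma_A^+$ need not be normal. Instead, since $[\Gamma_A:\Gamma_A^+] = r$ and $\delta$ generates a cyclic subgroup $\langle\delta\rangle\subset\Gamma_A$, the intersection $\langle\delta\rangle\cap\Gamma_A^+$ has index at most $r$ in $\langle\delta\rangle$, so $\delta^{k_1}\in\Gamma_A^+ = \mathrm{Aut}^+(X)_\bbZ$ for some $k_1\le r$. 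Thus $\delta^{k_1} = \alpha(g)$ for some $g\in\Spin(V,q)$, and $\beta'\alpha'(g) = \beta(\delta^{k_1}) = e^{k_1 N}$. Since $\alpha' = \alpha$ has finite kernel $\{\pm 1\}$ and $\beta'$ has finite kernel, the preimage of $e^{k_1 N}$ under $\beta'\circ\alpha'$ is a finite set; raising to a further power $k_2$ that kills this finite group, $g^{k_2}$ is a lift of $(e^{k_1 N})^{k_2} = e^{k_1 k_2 N}$ under the spin cover of $\mathrm{SO}(V,q)$. Because $e^{tN}$, $t\in\bbR$, is a one-parameter unipotent subgroup of $\mathrm{SO}(V,q)$ and $\Spin\to\mathrm{SO}$ restricts to an isomorphism on unipotent subgroups, the unique lift of $e^{k_1 k_2 N}$ to $\Spin(V,q)$ is $\exp(k_1 k_2 \tilde N)$ for the corresponding nilpotent $\tilde N$ in the spin algebra; but $\frso(V,q)$ and the spin Lie algebra are canonically identified, so this lift is literally $e^{kN}$ with $k = k_1 k_2$ in the exponential notation of the statement. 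Hence $\delta^k = \alpha(g^{k_2}) = e^{kN}$ acting via $\Spin(V,q)\to\mathrm{Aut}^P(X)$, as claimed.

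I expect the main obstacle to be the bookkeeping around the finite kernels and the fact that $\mathrm{Aut}^+(X)$ is not known to be finite-index in $\mathrm{Aut}^P(X)$ — so one genuinely must route everything through the arithmetic-group commensurability statement of Lemma \ref{lem_mc} rather than argue directly. The one point requiring a little care is the \emph{uniqueness} of the unipotent lift: one should observe that $\delta^k$ is itself unipotent on $H^\sdot(X,\bbQ)$ (it is a power of the monodromy of a family, hence unipotent by the hypothesis in Definition \ref{def_deg} and its propagation to all cohomology via the $\frso(\tilde V,\tilde q)$-action), and that $e^{kN}$ acting through $\Spin(V,q)\to\mathrm{Aut}^P(X)$ is also unipotent, so matching the two reduces to matching their common lift in the unipotent radical direction of $\Spin(V,q)$, where the exponential map is a bijection. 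This is exactly what lets me drop the ambiguity and write the clean identity $\delta^k = e^{kN}$.
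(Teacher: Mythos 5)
Your argument is correct and follows the paper's own (much terser) route: place $\delta$ in the arithmetic group $\mathrm{Aut}^P(X)_\bbZ$, invoke Lemma \ref{lem_mc} to put a power of $\delta$ into $\mathrm{Aut}^+(X)$, and then use the finiteness and centrality of the kernels of the isogenies in diagram \eqref{groups} to identify that power with $e^{kN}$. One caveat about your closing paragraph: unipotency of $\delta^k$ on all of $H^\sdot(X,\bbQ)$ is a \emph{consequence} of the proposition (Definition \ref{def_deg} only demands unipotency on $H^2$), so it should not be used as an input -- but your central-kernel computation $g^{k_2}=\tilde g^{\,k_2}$ already pins down the lift without it.
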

\begin{proof}
The monodromy operator $\delta$ is induced by a diffeomorphism of $\XX_t$ for a base point $t\in\Delta^*$.
Hence it is contained in the arithmetic subgroup $\Gamma_A = \mathrm{Aut}^P(X)_\bbZ$ of $\mathrm{Aut}^P(X)_\bbQ$.
The claim follows from Lemma \ref{lem_mc} applied to this subgroup.
\end{proof}

We get the following immediate consequence, that recovers Corollary 3.2 from \cite{KLSV}:

\begin{cor} If the monodromy action on $H^2(X,\bbQ)$ is trivial, then its action
on $H^\sdot(X,\bbQ)$ is of finite order.
\end{cor}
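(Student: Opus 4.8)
The plan is to combine Proposition \ref{prop_mon} with the classical structure of nilpotent monodromy. Suppose the monodromy action of $\gamma = e^N$ on $H^2(X,\bbQ)$ is trivial, i.e. $N = 0$. By Proposition \ref{prop_mon}, there exists an integer $k > 0$ such that $\delta^k = e^{kN} = e^0 = \mathrm{id}$ on $H^\sdot(X,\bbQ)$. Thus $\delta$ is an element of $\GL(H^\sdot(X,\bbQ))$ of finite order, which is exactly the assertion.

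The only subtlety worth spelling out is why Proposition \ref{prop_mon} applies: the proposition was stated for the monodromy operator $\delta$ of an arbitrary projective degeneration, and it produces $k$ depending only on the index of $\mathrm{Aut}^P(X)_\bbZ^+$ in $\mathrm{Aut}^P(X)_\bbZ$ (via Lemma \ref{lem_mc}), so there is no circularity and no extra hypothesis needed. In fact no hypothesis on the nilpotency index of $N$ is used here; the statement holds verbatim for any $N$ that happens to vanish on $H^2$. One could even remark that since the degeneration is required to have non-trivial monodromy on $H^2$ by Definition \ref{def_deg}, the case $N = 0$ does not actually occur for a genuine degeneration in our sense — but the corollary is stated as a conditional fact about the representation $\Spin(V,q)\to \mathrm{Aut}^P(X)$ and is harmless to record as such, matching \cite[Corollary 3.2]{KLSV}.

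There is no real obstacle here — the corollary is a one-line consequence of Proposition \ref{prop_mon} once one observes that $N = 0$ forces $e^{kN} = \mathrm{id}$. The substantive content has already been extracted in Lemma \ref{lem_mc} and Proposition \ref{prop_mon}; the corollary merely specializes to the degenerate case $N = 0$. Accordingly, I would write the proof in two sentences: invoke Proposition \ref{prop_mon}, then note that triviality on $H^2$ means $N$ acts as $0$, hence $\delta^k$ is the identity.

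\begin{proof}
If the monodromy on $H^2(X,\bbQ)$ is trivial then $N = 0$ as an operator on $V$, hence also as an element of $\frso(\tilde V,\tilde q)$ acting on $H^\sdot(X,\bbQ)$ via the representation $\Spin(V,q)\to \mathrm{Aut}^P(X)$. By Proposition \ref{prop_mon} there is an integer $k>0$ with $\delta^k = e^{kN} = \mathrm{id}$, so $\delta$ has finite order.
\end{proof}
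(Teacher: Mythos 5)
Your proof is correct and is exactly the argument the paper intends: the corollary is stated as an immediate consequence of Proposition \ref{prop_mon}, with triviality on $H^2$ forcing $N=0$ (as $\gamma=e^N$ with $N$ nilpotent) and hence $\delta^k=\mathrm{id}$. Your remark about the tension with Definition \ref{def_deg} requiring non-trivial monodromy is a fair observation but does not affect the validity of the argument.
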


Next we compare the limit MHS on $X$ and the Kuga-Satake abelian variety.

\begin{prop}\label{prop_lim_MHS}
There exists an integer $m>0$ and an embedding of mixed Hodge structures
$$(H^{\sdot+2n}(X,\bbQ(n)),\tilde{W}_\sdot,\tilde{F}^\sdot_{\mathrm{lim}})\hrarr (\Lambda^{\sdot+2d}(H^{*\oplus m})(d), W_\sdot,F^\sdot_{\mathrm{lim}}),$$
where $2n = \dim_\bbC(X)$ and $2d = \frac{1}{2} m \dim_\bbQ(H)$.
\end{prop}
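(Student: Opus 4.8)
The plan is to upgrade the embedding of Hodge structures from Theorem~\ref{thm_ks} to an embedding of \emph{limit} mixed Hodge structures, by checking that it is compatible with the two extra pieces of data that a limit MHS carries beyond the Hodge filtration: the monodromy weight filtration and the fact that the whole picture arises as a nilpotent orbit. The key point is that both sides are governed by the \emph{same} nilpotent operator $N$, acting through the $\frso(\tilde V,\tilde q)$-module structure on each side.

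First I would fix the base point $t\in\Delta^*$, the polarization class $h$, and pass to the orthogonal complement $V^h$ as in Section~\ref{sec_notations}; the Kuga-Satake Hodge structure $H$ is built from $(V^h,q)$, and by construction it carries the $\frso(V,q)$-action hence also the $\frso(\tilde V,\tilde q)$-action of Theorem~\ref{thm_ks}. Next, by Proposition~\ref{prop_mon} there is an integer $k>0$ with $\delta^k=e^{kN}$, where $N\in\frso(V^h,q)\subset\frso(\tilde V,\tilde q)$ and $e^{kN}$ acts via the representation $\frso(\tilde V,\tilde q)\to\emrp(H^\sdot(X,\bbQ))$. After replacing the base of the family by a finite cover $z\mapsto z^k$ (which changes neither the limit Hodge filtration nor the weight filtration up to the obvious rescaling of $N$), we may assume $\delta=e^{N}$ on the nose. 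Since on the Kuga-Satake side the monodromy also lifts to $\Spin(V^h,q)$ and is $e^{N}$ for the corresponding $N\in\frso(V^h,q)$ (see \cite[Section 3.1]{SS}), the embedding \eqref{eqn_ks} of $\frso(\tilde V,\tilde q)$-modules is by construction $N$-equivariant. Consequently it is strict for the monodromy weight filtrations: the weight filtration $W_\sdot$ on each side is the unique increasing filtration with $N W_i\subset W_{i-2}$ and $N^j\colon\mathrm{gr}^W_{c+j}\xrightarrow{\sim}\mathrm{gr}^W_{c-j}$ (centered at the appropriate weight $c$), and an $N$-equivariant injection of modules automatically carries the weight filtration of the source into that of the target and is strict, because the induced map on $N$-primitive parts is injective. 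This handles the $W$-part.

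For the Hodge filtration, I would recall that the limit Hodge filtration $F^\sdot_{\mathrm{lim}}$ is obtained from the nilpotent orbit: on $V^h$ it is $\psi(0)=\lim_{z}e^{-zN}\tilde\varphi(z)$, and the Hodge structures on $H^\sdot(X,\bbQ)$ along the family are induced by the Weil operator $W$ via the $\frso(\tilde V,\tilde q)$-representation, compatibly with \eqref{eqn_ks}. Untwisting the period map the same way on both sides, $e^{-zN}$ applied through the respective module structures, and using that the embedding is a morphism of $\frso(\tilde V,\tilde q)$-modules hence commutes with $e^{-zN}$ and with the limiting Weil operator, one gets that \eqref{eqn_ks} sends $\tilde F^\sdot_{\mathrm{lim}}$ into $F^\sdot_{\mathrm{lim}}$; strictness is automatic for a morphism of (limit) MHS once compatibility is known, since morphisms of MHS are strict for both filtrations. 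The Kuga-Satake Hodge structure $H$ is of abelian type and polarized, so $\Lambda^{\sdot+2d}(H^{*\oplus m})$ with its nilpotent orbit is a genuine limit MHS of a degeneration of abelian varieties, and the Tate twists $(n)$ and $(d)$ are inserted exactly as in Theorem~\ref{thm_ks} to match weights. I would then assemble these observations into the statement, choosing the same $m$ and $d$ as in Theorem~\ref{thm_ks}.

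The main obstacle I expect is the bookkeeping around passing to the $k$-th root cover and verifying that the extended period map $\psi$ on the higher cohomology is genuinely the one induced through the $\frso(\tilde V,\tilde q)$-representation from $\psi$ on $V^h$ — i.e.\ that the nilpotent orbit of the VHS $R^{\sdot}\pi'_*\bbQ$ is literally the image of the nilpotent orbit on $R^2\pi'_*\bbQ$ under the representation, rather than merely abstractly isomorphic to it. This needs the functoriality of the nilpotent orbit theorem together with the fact (from \cite{Ve1}, \cite{LL}, \cite[Theorem A.10]{KSV}) that the $\frso(\tilde V,\tilde q)$-action is defined over $\bbQ$ and varies holomorphically in the family, so that it intertwines the period maps in all degrees. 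Once that identification is in place, everything else is formal from the fact that morphisms of mixed Hodge structures are strictly compatible with both filtrations.
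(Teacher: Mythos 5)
Your proposal is correct and follows essentially the same route as the paper: reduce to $\delta=e^{N}$ via Proposition~\ref{prop_mon}, observe that both weight filtrations are induced by the same $N\in\frso(V,q)$ acting through the $\frso(\tilde V,\tilde q)$-module structure, and obtain compatibility of the limit Hodge filtrations by passing to the limit of the pointwise compatibility of the untwisted period maps. The one step you flag as the ``main obstacle'' --- justifying that compatibility survives in the limit --- is exactly what the paper makes precise by cutting out the closed incidence subscheme $\mathcal{Z}\subset\hat{\DD}_X\times\hat{\DD}_{KS}$ where the bundle maps $\eta_i$ vanish, checking it is $\Spin(V,q)$-invariant so the twisted period map lands in it, and invoking closedness of $\mathcal{Z}$ together with Schmid's nilpotent orbit theorem.
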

\begin{proof}
We use the same convention with the shift of cohomological grading as in Theorem \ref{thm_ks},
see the remark after that theorem. In particular, the Hodge filtration on $H^{\sdot+2n}(X,\bbQ(n))$
has non-trivial graded components in degrees $-n,\ldots,n$.

The limit mixed Hodge structures do not change if we replace the monodromy operator by its power.
Thus we may use Proposition \ref{prop_mon} and assume that $\delta = e^N$, where the exponential
is viewed as an element of $\Spin(V,q)$. This implies that the embedding from Theorem \ref{thm_ks}
is compatible with the weight filtrations, since they are both induced by the action of $N\in \frso(V,q)$.

Next we deal with the limit Hodge filtrations. Let us denote by $\DD_X$ and $\hat{\DD}_X$ the period domain,
respectively the extended period domain for the $h$-polarized Hodge structures on $H^\sdot(X,\bbQ)$.
Analogously, $\DD_{KS}$ and $\hat{\DD}_{KS}$ will denote the period domain, respectively the extended
period domain for the Hodge structures on $\Lambda^\sdot(H^{*\oplus m})$ polarized by a fixed form $\omega$
as above. Both $\hat{\DD}_X$ and $\hat{\DD}_{KS}$ are closed subvarieties of certain flag
varieties, and $\DD_X$, $\DD_{KS}$ are their open subsets (see \cite{Sch} for the description of period domains
as subvarieties of flag varieties).

The variety $\hat{\DD}_X$ carries a universal family of holomorphic bundles that determine the Hodge filtration:
$$
H^\sdot(X,\bbQ)\otimes\OO_{\hat{\DD}_X} = \tilde{\FF}^{-n}\supset \tilde{\FF}^{-n+1} \supset\ldots\supset \tilde{\FF}^{n}\supset \tilde{\FF}^{n+1} = 0.
$$
Analogously, over $\hat{\DD}_{KS}$ we have a family of subbundles
$$
\Lambda^\sdot(H^{*\oplus m})\otimes\OO_{\hat{\DD}_{KS}} = \FF^{-d}\supset \FF^{-d+1} \supset\ldots\supset \FF^{d}\supset \FF^{d+1} = 0.
$$

Let $p_X$ and $p_{KS}$ denote the two projections from
$\hat{\DD}_X\times \hat{\DD}_{KS}$ to the factors. For every $i=-n,\ldots,n$ consider the
morphism of vector bundles $$\eta_i\colon p_{X}^*\tilde{\FF}^i\to p_{KS}^*(\FF^{-d}/\FF^{i})$$
obtained as the composition of three morphisms:
the embedding $p_{X}^*\tilde{\FF}^i \hrarr H^\sdot(X,\bbQ)\otimes\OO_{\hat{\DD}_X\times \hat{\DD}_{KS}}$,
the embedding from Theorem \ref{thm_ks} and the projection to the quotient $\Lambda^\sdot(H^{*\oplus m})\otimes\OO_{\hat{\DD}_X\times \hat{\DD}_{KS}} \to p_{KS}^*(\FF^{-d}/\FF^{i})$. Denote by $\mathcal{Z}$ the closed subscheme
of $\hat{\DD}_X\times \hat{\DD}_{KS}$ where all $\eta_i$ vanish. The points of $\mathcal{Z}$
correspond to such pairs of filtrations that the embedding from Theorem \ref{thm_ks} is
compatible with them.

After passing to the universal cover of the punctured disc, we get two period maps
$\tilde{\varphi}_X\colon \tilde{\Delta}\to \DD_X$ and $\tilde{\varphi}_{KS}\colon \tilde{\Delta}\to \DD_{KS}$.
Since the Hodge structures on $H^\sdot(X,\bbQ)$ and $\Lambda^\sdot(H^{*\oplus m})$ are both
determined by the action of the Weil operators $W(z)\in \frso(V,q)$, $z\in \tilde{\Delta}$,
the embedding from Theorem \ref{thm_ks} is a morphism of Hodge structures. This means
that the product of $\tilde{\varphi}_X$ and $\tilde{\varphi}_{KS}$ gives a map
$\tilde{\varphi}\colon \tilde{\Delta}\to \mathcal{Z}\subset \hat{\DD}_X\times \hat{\DD}_{KS}$.

Consider now the twisted period maps $\tilde{\psi}_X\colon \tilde{\Delta}\to \hat{\DD}_X$ and
$\tilde{\psi}_{KS}\colon \tilde{\Delta}\to \hat{\DD}_{KS}$, where $\tilde{\psi}_X(z) = e^{-zN}\tilde{\varphi}_X(z)$ and
$\tilde{\psi}_{KS}(z) = e^{-zN}\tilde{\varphi}_{KS}(z)$. Let $\tilde{\psi}\colon \tilde{\Delta}\to
\hat{\DD}_X\times \hat{\DD}_{KS}$ be their product. Since the subscheme $\mathcal{Z}$
is $\Spin(V,q)$-invariant by construction, we have $\tilde{\psi}\colon \tilde{\Delta}\to \mathcal{Z}$.

By a theorem of Schmid \cite[Theorem 4.9]{Sch}
there exists a limit $\lim\limits_{\mathrm{Im}z\to +\infty}\tilde{\psi}(z)$.
The corresponding point of $\hat{\DD}_X\times \hat{\DD}_{KS}$ determines the pair
of limit Hodge filtrations $\tilde{F}^\sdot_{\mathrm{lim}}$ and $F^\sdot_{\mathrm{lim}}$.
Since the subscheme $\mathcal{Z}$ is closed, the limit
lies in $\mathcal{Z}$. We conclude that the limit Hodge filtrations are compatible.
\end{proof}

\begin{thm}\label{thm_main} Let $X$ be a simple hyperk\"ahler manifold and let
$\pi\colon \XX\to \Delta$ be a maximally unipotent projective degeneration of $X$.
Then the limit mixed Hodge structures on $H^k(X,\bbQ)$ are of
Hodge-Tate type for all $k$.
\end{thm}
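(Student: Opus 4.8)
The plan is to deduce the theorem from Proposition~\ref{prop_lim_MHS} together with the fact, recalled before that proposition, that for a maximally unipotent degeneration the limit MHS on $V^h = H^2$ is of Hodge-Tate type. I would first record two permanence statements. The class of Hodge-Tate MHS is stable under sub-objects, Tate twists, finite direct sums, tensor products and duals: for the first two this is clear, and for the rest it follows by applying the exact tensor functor $\mathrm{gr}^W_\sdot$, which carries these operations to the corresponding ones on graded Hodge structures of the shape $\bigoplus_k\bbQ(k)^{\oplus r_k}$, a class visibly closed under them. Likewise the formation of the limit MHS commutes with $\oplus$, $\otimes$ and $(\cdot)^\ast$: the weight filtration is the monodromy weight filtration of $N$, which is compatible with these operations, and the limit Hodge filtration $\lim_{\mathrm{Im}\,z\to+\infty}e^{-zN}\tilde\varphi(z)$ arises from a continuous construction that respects them --- this is exactly the mechanism used in the proof of Proposition~\ref{prop_lim_MHS}. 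Granting these, Proposition~\ref{prop_lim_MHS} expresses the limit MHS on $H^k(X,\bbQ)$, up to a Tate twist, as a sub-MHS of the limit MHS of $\Lambda^\sdot(\HH^{\ast\oplus m})$; since that variation is built from the Kuga-Satake variation $\HH$ (fibre $H = \Cl(V,q)$) by the operations above, and $H\simeq(H^h)^{\oplus 2}$, the theorem reduces to the single claim that the limit MHS of the Kuga-Satake variation $\HH^h$ attached to $\VV^h$ (fibre $H^h = \Cl(V^h,q)$) is of Hodge-Tate type.

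To prove that claim I would pass through the Clifford algebra. On $\HH^h$ both the limit Hodge and the weight filtrations are induced by $W,N\in\frso(V^h,q) = \Lambda^2 V^h\subset\Cl(V^h,q)$ acting by left multiplication $X\mapsto L_X$ (as in the Kuga-Satake construction recalled above); hence on $\IHom(\HH^h,\HH^h)$ they are induced by $X\mapsto[L_X,\,\cdot\,]$. Extend scalars to $\bbC$ --- harmless, as the Hodge-Tate condition for a $\bbQ$-MHS is tested on the complexification of $\mathrm{gr}^W_\sdot$. By the Wedderburn structure of $\Cl(V^h_\bbC,q)$, the variation $\HH^h\otimes\bbC$ is a finite direct sum of copies of a spinor variation $\mathcal S$ whose Hodge and monodromy data are induced from $W,N$ through the spinor representation; thus $\IHom(\HH^h,\HH^h)\otimes\bbC$ is a finite direct sum of copies of $\IHom(\mathcal S,\mathcal S)$. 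The fibre of the latter is $\mathrm{End}(S)$, which under the $\frso(V^h_\bbC,q)$-action is the classical decomposition $\mathrm{End}(S)\simeq\bigoplus_j\Lambda^j V^h_\bbC$ into exterior powers of the standard representation; as an $\frso(V^h,q)$-equivariant isomorphism automatically matches the Hodge and monodromy data (both images of $W$ and $N$), this is an isomorphism of variations of Hodge structures. Since the limit MHS of $\VV^h$ is of Hodge-Tate type, so is that of each $\Lambda^j\VV^h$ (a direct summand of $(\VV^h)^{\otimes j}$), hence so is the limit MHS of $\IHom(\mathcal S,\mathcal S)$ and therefore of $\IHom(\HH^h,\HH^h)$.

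It remains to ``divide by two''. Let $P_w$ be the weight-$w$ graded piece of the limit MHS of the $\bbQ$-variation $\HH^h$. From $\IHom(\HH^h,\HH^h) = (\HH^h)^\ast\otimes\HH^h$ and the commutation of the limit MHS with $\otimes$ and $(\cdot)^\ast$, the weight-zero graded piece of the (Hodge-Tate) limit MHS of $\IHom(\HH^h,\HH^h)$ is $\bigoplus_w\mathrm{End}_\bbQ(P_w)$; being pure of type $(0,0)$, each $\mathrm{End}_\bbQ(P_w)$ is a pure $\bbQ$-Hodge structure of type $(0,0)$. This forces $P_w$ to be concentrated in a single Hodge type, and then the rational structure forces that type to be $(w/2,w/2)$, so that $w$ is even and $P_w\simeq\bbQ(-w/2)^{\oplus r_w}$. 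Hence the limit MHS of $\HH^h$ is of Hodge-Tate type, and running back through the reductions of the first paragraph proves the theorem.

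I expect the main obstacle to be the Clifford-algebra comparison: one must check that the algebra-theoretic isomorphisms above are isomorphisms of \emph{variations} of Hodge structures and not merely of $\frso(V^h,q)$-modules --- which holds because on each variation in sight the Hodge and monodromy data are pulled back from $V^h$ through a fixed representation of $\frso(V^h,q)$ --- and one must keep track of the Wedderburn decomposition of $\Cl(V^h_\bbC,q)$ (and the even/odd splitting, so as to work with a central simple $\bbC$-algebra when $\dim V^h$ is odd). The auxiliary facts used above --- that $\mathrm{gr}^W_\sdot$ and the limit-MHS functor commute with the tensor-algebra operations, in particular that the monodromy weight filtration on $\Lambda^j\VV^h$ is the convolution of the one on $\VV^h$ --- are standard, and the input that the limit MHS on $H^2$ is of Hodge-Tate type for a type III degeneration is classical, as recalled in the text.
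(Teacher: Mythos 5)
Your proposal is correct, and its skeleton coincides with the paper's: reduce via Proposition~\ref{prop_lim_MHS} and the fact that Hodge--Tate mixed Hodge structures form a tensor subcategory (closed under sub-objects, duals, tensor products and Tate twists) to the single claim that the limit MHS of the Kuga--Satake variation is of Hodge--Tate type. The difference is in how that claim is handled. The paper simply cites the proof of Theorem~1.2(3) of \cite{SS}, which establishes it by an explicit computation: one puts $N$ in the normal form $e_1'\wedge(e_2+e_2')$ inside $\frso(V^h,q)\subset\Cl(V^h,q)$ (the same normal form reappears in Proposition~\ref{prop_index} of this paper) and computes the limit Hodge and weight filtrations on the Clifford algebra directly. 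You instead give a structural argument: Hodge--Tate-ness of the limit MHS of $\IHom(\HH^h,\HH^h)$ follows from the $\frso(V^h_\bbC,q)$-equivariant isomorphism $\End(S)\simeq\bigoplus_j\Lambda^jV^h_\bbC$ together with the classical fact that the limit MHS on $H^2$ is Hodge--Tate for a type~III degeneration, and then a ``square root'' step --- $\End(P_w)$ pure of type $(0,0)$ forces $P_w$ to sit in one Hodge type, and the real structure pins that type to $(w/2,w/2)$ --- recovers the statement for $\HH^h$ itself. Both routes are valid. Yours is more conceptual and avoids choosing a normal form for $N$, at the price of the bookkeeping you rightly flag (the Wedderburn/even--odd structure of $\Cl(V^h_\bbC,q)$, and the verification that all identifications intertwine the Hodge and monodromy data because everything is pulled back from $V^h$ through fixed representations of $\frso(V^h,q)$); it also leans on the compatibility of the limit-MHS construction with $\otimes$ and $(\cdot)^*$, i.e.\ the convolution formula for the monodromy weight filtration of a tensor product, which is standard but should be attributed (Deligne, or \cite[Theorem 6.16]{Sch} combined with the uniqueness of the relative weight filtration). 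The explicit computation in \cite{SS} buys, in addition, a description of \emph{which} Hodge--Tate structure arises (the Kuga--Satake abelian variety degenerates to a torus), which your argument does not immediately give but which is not needed for Theorem~\ref{thm_main}.
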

\begin{proof}
Mixed Hodge structures of Hodge-Tate type form a tensor subcategory inside the abelian category of MHS.
Hence by Proposition \ref{prop_lim_MHS} it suffices to check that the limit MHS of the
variation of Kuga-Satake Hodge structures $(H,W_\sdot,F^\sdot_{\mathrm{lim}})$ is of Hodge-Tate type.
This follows from \cite[proof of Theorem 1.2(3)]{SS}. 
\end{proof}

The knowledge of the limit MHS provides some information about cohomology of
the central fibre, at least if the degeneration is semistable.

Let $Y$ be a quasi-projective variety. Recall, that cohomology groups of $Y$ carry functorial
mixed Hodge structures. If $Y$ is projective, the weights of the MHS on $H^k(Y,\bbQ)$
lie in the range $0,\ldots,k$ for $k\le\dim_\bbC(Y)$ and $2k - 2\dim_\bbC(Y),\ldots, k$ for $k > \dim_\bbC(Y)$.

\begin{defn} We will say that the mixed Hodge structure on $H^k(Y,\bbQ)$ is semi-pure, if the induced
mixed Hodge structure on $W_{k-1}H^k(Y,\bbQ)$ is of Hodge-Tate type.
\end{defn}

\begin{cor} In the setting of Theorem \ref{thm_main}, assume that
$\XX_0$ is a reduced divisor with simple normal crossings. Then the mixed Hodge structures
on $H^k(\XX_0,\bbQ)$ are semi-pure for all $k$.
\end{cor}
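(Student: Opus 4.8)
The plan is to deduce semi-purity from Theorem~\ref{thm_main} by means of the Clemens--Schmid exact sequence, which relates the mixed Hodge structure on $H^k(\XX_0,\bbQ)$ to the limit mixed Hodge structure on the cohomology of a general fibre.

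First I would fix the set-up. Since $\pi$ is projective and $\XX_0$ is a reduced simple normal crossing divisor, $\pi\colon\XX\to\Delta$ is a semistable degeneration whose total space $\XX$ is a smooth K\"ahler manifold of complex dimension $2n+1$, and $\XX_0\hookrightarrow\XX$ is a deformation retract, so $H^k(\XX_0,\bbQ)\cong H^k(\XX,\bbQ)$. The Clemens--Schmid exact sequence then gives, for every $k$, an exact sequence of mixed Hodge structures
\[
H_{4n+2-k}(\XX_0,\bbQ)\xrightarrow{\alpha} H^k(\XX_0,\bbQ)\xrightarrow{\mathrm{sp}} H^k_{\mathrm{lim}},
\]
where $H^k_{\mathrm{lim}}=H^k(\XX_t,\bbQ)$ carries the limit MHS (whose weight filtration is the monodromy filtration of $N$ acting on $H^k$; this is unchanged if $\delta$ is replaced by a power, so Proposition~\ref{prop_mon} applies), the homology group $H_{m}(\XX_0,\bbQ)$ carries the mixed Hodge structure dual to that of $H^{m}(\XX_0,\bbQ)$ twisted by $\bbQ(-(2n+1))$, and $\mathrm{sp}$ is the specialization morphism; in particular $\ker(\mathrm{sp})=\im(\alpha)$.

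Next comes the weight bookkeeping. The MHS on $H^k(\XX_0,\bbQ)$ has weights $\le k$, as recalled before the statement. Applying the same bound to $H^{4n+2-k}(\XX_0,\bbQ)$ and accounting for the twist $\bbQ(-(2n+1))$, the MHS on $H_{4n+2-k}(\XX_0,\bbQ)$ has weights $\ge (4n+2)-(4n+2-k)=k$. Since $\alpha$ is a morphism of mixed Hodge structures it is strict for the weight filtrations, so $\im(\alpha)$, being a subquotient both of a structure of weights $\ge k$ and of a structure of weights $\le k$, is pure of weight $k$ (or zero). Hence $\ker(\mathrm{sp})$ is pure of weight $k$, so $\ker(\mathrm{sp})\cap W_{k-1}H^k(\XX_0,\bbQ)=0$ and $\mathrm{sp}$ restricts to an injective morphism of mixed Hodge structures $W_{k-1}H^k(\XX_0,\bbQ)\hookrightarrow H^k_{\mathrm{lim}}$.

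To finish, Theorem~\ref{thm_main} (applied with $X\simeq\XX_t$) says that $H^k_{\mathrm{lim}}$ is of Hodge--Tate type, and the Hodge--Tate mixed Hodge structures form an abelian, indeed tensor, subcategory of the category of MHS (as used already in the proof of Theorem~\ref{thm_main}), hence it is closed under passage to sub-objects. Therefore $W_{k-1}H^k(\XX_0,\bbQ)$ is of Hodge--Tate type, i.e. $H^k(\XX_0,\bbQ)$ is semi-pure, for all $k$. I expect the point needing the most care to be the precise description of the mixed Hodge structures on the homology terms of the Clemens--Schmid sequence, and in particular pinning down the Tate twist so that the estimate that those weights are $\ge k$ is correct; once the twist is right the weight comparison is forced, and the remaining input (availability of the Clemens--Schmid formalism for a proper K\"ahler map to the disc) is standard.
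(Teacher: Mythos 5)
Your proof is correct and follows essentially the same route as the paper: the Clemens--Schmid sequence, the weight bound $\ge k$ on the term mapping to $H^k(\XX_0,\bbQ)$ (which the paper writes as the supported cohomology $H^k_{\XX_0}(\XX,\bbQ)$ rather than as twisted homology, but these agree by duality), and then Theorem \ref{thm_main} together with the fact that Hodge--Tate structures are closed under sub-objects. Your explicit bookkeeping of the Tate twist simply spells out what the paper compresses into ``It follows from Poincar\'e duality.''
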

\begin{proof}
Consider the Clemens-Schmid exact sequence of MHS, where $\nu = (2\pi \ii)^{-1}N$:
$$
\ldots\lrarr H^k_{\XX_0}(\XX,\bbQ)\lrarr H^k(\XX_0,\bbQ)\lrarr H^k_{\mathrm{lim}}(X,\bbQ) \stackrel{\nu}{\lrarr} H^k_{\mathrm{lim}}(X,\bbQ)(-1)\lrarr\ldots
$$
It follows from Poincar\'e duality that the MHS on $H^k_{\XX_0}(\XX,\bbQ)$ has weights $\ge k$.
Hence the MHS on $W_{k-1}H^k(\XX_0,\bbQ)$ is determined by the limit
MHS of the degeneration. The claim now follows from Theorem \ref{thm_main}.
\end{proof}

\subsection{Unipotency indices of the monodromy action on higher cohomology groups}

It was observed in \cite[Proposition 6.18]{KLSV}, that for maximally unipotent degenerations
of hyperk\"ahler manifolds the index of unipotency of the monodromy
action on $H^{2k}(X,\bbQ)$ equals $2k+1$, where $k = 1,\ldots, n$ and as before $2n = \dim_\bbC(X)$.
We will explain below, that it is also possible to determine the index of unipotency
for odd degree cohomology groups, see Proposition \ref{prop_index}.
This applies, in particular, to maximal degenerations of generalized Kummer
type manifolds.

In this subsection we will briefly write $H^\sdot$ for $H^\sdot(X,\bbC)$ considered
as an $\frso(\tilde{V}_\bbC,\tilde{q})$-module (see section \ref{subsec_mukai}).
We will use the highest weight theory for the orthogonal Lie algebra (see e.g. \cite[Chapter VIII, \S 13]{Bou}).
Let us fix two elements $\xi_0, \xi_1\in \frso(\tilde{V}_\bbC,\tilde{q})$ that define the Hodge
bigrading on $H^\sdot$. More precisely, $\xi_0$ acts on $H^{r,s}$ as multiplication by $\frac{1}{2}(r+s) - n$,
and $\xi_1$ as multiplication by $\frac{1}{2}(s-r)$.
Next we choose a Cartan subalgebra $\tilde{\frh}\subset \frso(\tilde{V}_\bbC,\tilde{q})$
that contains these two elements and fix a basis $\tilde{\frh} = \langle \xi_0,\xi_1,\ldots,\xi_l\rangle$,
where $l = \lfloor\frac{1}{2}\dim V\rfloor$.
Note that $\frh = \tilde{\frh}\cap \frso(V_\bbC,q) = \langle \xi_1,\ldots,\xi_l\rangle$ is
a Cartan subalgebra of $\frso(V_\bbC,q)$. Let $\varepsilon_i$ denote
the dual basis: $\tilde{\frh}^* = \langle \varepsilon_0,\ldots,\varepsilon_l\rangle$.

We recall from loc. cit. the expressions for positive roots and fundamental weights.
In the case of odd $\dim V$, the set of positive roots in $\tilde{\frh}^*$ is
$R^+=\{\varepsilon_i\st 0\le i\le l\}\cup\{\varepsilon_i \pm \varepsilon_j\st 0\le i< j\le l\}$;
the fundamental weights are: $\varpi_i = \varepsilon_0+\ldots+\varepsilon_i$, $i = 0,\ldots,l-1$
and $\varpi_l = \frac{1}{2}(\varepsilon_0+\ldots+\varepsilon_l)$. The representation
with highest weight $\varpi_l$ is the spinor representation. 

In the case when $\dim V$ is even, we always have $l\ge 2$, since $b_2(X)\ge 3$.
Then $R^+=\{\varepsilon_i \pm \varepsilon_j\st 0\le i< j\le l\}$;
the fundamental weights are: $\varpi_i = \varepsilon_0+\ldots+\varepsilon_i$, $i = 0,\ldots,l-2$,
$\varpi_{l-1} = \frac{1}{2}(\varepsilon_0+\ldots+\varepsilon_{l-1}-\varepsilon_l)$
and $\varpi_l = \frac{1}{2}(\varepsilon_0+\ldots+\varepsilon_{l-1}+\varepsilon_l)$. The representations
with highest weights $\varpi_{l-1}$ and $\varpi_l$ are the two semi-spinor representations.

The images of $\varpi_1,\ldots,\varpi_l$ under the natural projection $\tilde{\frh}^*\to \frh^*$
are the fundamental weights of $\frso(V_\bbC,q)$. We will denote them by the same letters.

One can determine the highest weight of the irreducible $\frso(\tilde{V}_\bbC,\tilde{q})$-submodule of $H^\sdot$
generated by $H^0$. This submodule coincides with the subalgebra of $H^\sdot$ generated by $H^2$, whose description
is well-known, see e.g \cite{Ve1}. The action of $\frso(V_\bbC,q)$ on $H^0$ is trivial, and
the element $\xi_0$ acts as the scalar $-n$. It is clear that $H^0$ is spanned by a lowest
weight vector with weight $-n\varepsilon_0$. So the subrepresentation generated by $H^0$
is of highest weight $n\varepsilon_0$; it can be described as the kernel of the map
$S^n\tilde{V}_\bbC\to S^{n-2}\tilde{V}_\bbC$ given by contraction with $\tilde{q}$.
Next we would like to determine the possible highest weights of the subrepresentation generated
by $H^3$.

\begin{lem}\label{lem_H3}
The $\frso(V_\bbC,q)$-module $H^3$ is the direct sum of several
copies of spinor or semi-spinor representations.
\end{lem}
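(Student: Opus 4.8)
The plan is to determine the highest weights of the irreducible summands of $H^3$ as an $\frso(V_\bbC,q)$-module, using two Hodge-theoretic inputs to cut them down to the (semi-)spinor highest weights.

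First I would note that $H^3$ genuinely is an $\frso(V_\bbC,q)$-module: since $\frso(V_\bbC,q)$ lies in the degree-zero part $\frso^0(\tilde V_\bbC,\tilde q)$, it commutes with $\xi_0$, and $H^3$ is exactly the eigenspace of $\xi_0$ on $H^\sdot$ with eigenvalue $\frac12\cdot 3-n=\frac32-n$; hence $\frso(V_\bbC,q)$ preserves $H^3$, and by complete reducibility $H^3=\bigoplus_j M_j$ with each $M_j$ irreducible of dominant highest weight $\lambda_j=\sum_{i=1}^l c_i\varepsilon_i$. Then I would extract two constraints. (i) Since $\xi_0$ acts on all of $H^3$ by the half-integer $\frac32-n$, and the roots of $\frso(\tilde V_\bbC,\tilde q)$ all have integral $\varepsilon_0$-coordinate, every weight $\mu$ of $H^3$ (regarded as an $\frso(\tilde V_\bbC,\tilde q)$-weight) has $\mu(\xi_0)=\frac32-n\in\bbZ+\frac12$; as the weight lattice of $\frso(\tilde V_\bbC,\tilde q)$ consists of the vectors with all $\varepsilon$-coordinates in $\bbZ$ together with those with all coordinates in $\bbZ+\frac12$, it follows that all coordinates of $\mu$ lie in $\bbZ+\frac12$, so $c_i\in\bbZ+\frac12$ for every $i$ after restricting to $\frh$. (One may instead invoke that the $\frso(V_\bbC,q)$-representation on odd cohomology only integrates to $\Spin(V,q)$, with the central element $-1$ acting by $-1$.) (ii) Since $X$ is hyperk\"ahler, $H^0(X,\Omega^p_X)=0$ for odd $p$, so $H^{3,0}=H^{0,3}=0$ and $H^3=H^{2,1}\oplus H^{1,2}$; therefore $\xi_1$ acts on $H^3$ with eigenvalues only $\pm\frac12$, and so $c_1=\lambda_j(\xi_1)\le\frac12$ for every summand.

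Finally I would combine these with dominance. If $\dim V$ is odd, then $c_1\ge\cdots\ge c_l\ge 0$ together with $c_i\in\bbZ+\frac12$ and $c_1\le\frac12$ forces $c_1=\cdots=c_l=\frac12$, i.e.\ $\lambda_j=\varpi_l$, the spinor highest weight. If $\dim V$ is even, then $c_1\ge\cdots\ge c_{l-1}\ge|c_l|$ together with the same integrality and bound forces $c_1=\cdots=c_{l-1}=\frac12$ and $c_l=\pm\frac12$, i.e.\ $\lambda_j\in\{\varpi_{l-1},\varpi_l\}$, the two semi-spinor highest weights. Hence $H^3$ is a direct sum of copies of the spinor, respectively semi-spinor, representations. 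I do not expect a real obstruction; the one place to be careful is that constraining the single coordinate $c_1$ suffices — this works because dominance propagates the bound $c_1\le\frac12$ to all coordinates once the half-integrality from (i) is known, pinning the highest weight down exactly.
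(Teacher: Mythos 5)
Your proof is correct and follows essentially the same route as the paper: both decompose $H^3$ into irreducible $\frso(V_\bbC,q)$-modules and use the vanishing $H^{3,0}=H^{0,3}=0$ to force the $\xi_1$-eigenvalue of each highest weight vector to be $\pm\frac{1}{2}$, which pins the highest weight down to the spinor or semi-spinor one. The paper gets there slightly more directly by writing the highest weight as $a_1\varpi_1+\dots+a_l\varpi_l$ with $a_i\in\bbZ_{\ge 0}$, so that the eigenvalue condition $a_1+\dots+a_{l-1}+\frac{1}{2}a_l=\frac{1}{2}$ (resp.\ its analogue in type $D$) immediately forces the (semi-)spinor weight, making your half-integrality step (i) unnecessary; that step of yours, via the $\xi_0$-eigenvalue $\frac{3}{2}-n$ and the weight lattice of $\frso(\tilde{V}_\bbC,\tilde{q})$, is nevertheless correct.
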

\begin{proof} We assume that $H^3\neq 0$.
Since $H^{3,0} = H^{0,3} = 0$, the vector space $H^3$ is the direct sum of two eigenspaces of $\xi_1$
with eigenvalues $\pm\frac{1}{2}$. Assume that $H^3$ contains an irreducible subrepresentation
with highest weight $a_1\varpi_1 + \ldots + a_l\varpi_l$, $a_i\in \bbZ_{\ge 0}$.
If $\dim V$ is odd, $\xi_1$ acts on the highest weight vector as the scalar $a_1+\ldots+a_{l-1}+\frac{1}{2}a_l$.
This is only possible when $a_1=\ldots=a_{l-1}=0$ and $a_l=1$.
If $\dim V$ is even, $\xi_1$ acts as $a_1+\ldots+a_{l-2}+\frac{1}{2}(a_{l-1}+a_l)$.
This is only possible when $a_1=\ldots=a_{l-2}=0$ and either $a_{l-1}=1$, $a_l=0$, or $a_{l-1}=0$, $a_l=1$.
In both cases we have either spinor or semi-spinor representation.
\end{proof}

\begin{lem}\label{lem_irred}
Let $W^\sdot\subset H^\sdot$ be an irreducible $\frso(\tilde{V}_\bbC,\tilde{q})$-submodule.
Then $W^3 \simeq W^{4n-3}$ is an irreducible $\frso(V_\bbC,q)$-module.
\end{lem}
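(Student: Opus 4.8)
The plan is to read the statement off the short $\bbZ$-grading that the semisimple element $\xi_0$ induces on $\frg := \frso(\tilde{V}_\bbC,\tilde{q})$. Since $\xi_0\in\tilde{\frh}$ pairs to $1$ with $\varepsilon_0$ and to $0$ with $\varepsilon_1,\dots,\varepsilon_l$, the lists of roots recalled above show that every root of $\frg$ has $\varepsilon_0$-coefficient in $\{-1,0,1\}$, so $\xi_0$ yields a decomposition $\frg = \frg_{-1}\oplus\frg_0\oplus\frg_1$ into $\xi_0$-eigenspaces; moreover the roots with $\varepsilon_0$-coefficient $0$ are exactly the roots of $\frso(V_\bbC,q)$, so that $\frg_0 = \frso(V_\bbC,q)\oplus\bbC\,\xi_0$. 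Because $\xi_0$ acts on $H^k$ as $\tfrac{k}{2}-n$, the operators in $\frg_{\pm1}$ carry $H^k$ into $H^{k\pm2}$, whereas $\frg_0$ — hence $\frso(V_\bbC,q)$ — preserves each $H^k$. So $W^3$ and $W^{4n-3}$ are $\frso(V_\bbC,q)$-submodules of $H^3$ and $H^{4n-3}$; by Lemma \ref{lem_H3} (applied to $H^3$, and to $H^{4n-3}\simeq (H^3)^*$, using that $\frso(V_\bbC,q)$ acts by derivations and trivially on $H^{4n}(X,\bbC)$, so that Poincar\'e duality is $\frso(V_\bbC,q)$-equivariant) they are direct sums of spinor or semi-spinor representations, and the point is to show that each of them consists of a single such summand.

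The decisive observation is that $W^3$ and $W^{4n-3}$ are the two extreme graded pieces of the irreducible module $W^\sdot$: since $X$ is simply connected $H^1(X,\bbC)=0$, hence also $H^{4n-1}(X,\bbC)=0$ by Poincar\'e duality, so $\frg_{-1}\cdot W^3\subseteq W^1 = 0$ and $\frg_1\cdot W^{4n-3}\subseteq W^{4n-1}=0$. (If $W^3=0$ there is nothing to prove, so assume $W^3\neq0$.) It therefore suffices to show that an extreme graded piece of an irreducible $\frg$-module is $\frg_0$-irreducible. Let $A\subseteq W^3$ be a nonzero $\frso(V_\bbC,q)$-submodule; as $\xi_0$ acts on $A$ by the scalar $\tfrac{3}{2}-n$, $A$ is a $\frg_0$-submodule annihilated by $\frg_{-1}$. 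By the Poincar\'e--Birkhoff--Witt theorem $U(\frg)=U(\frg_1)\,U(\frg_0)\,U(\frg_{-1})$, so (as $\frg_{-1}$ kills $A$ and $\frg_0$ preserves it) $U(\frg)\cdot A = U(\frg_1)\cdot A = \bigoplus_{m\ge0}\frg_1^{\,m}A$; this is a nonzero $\frg$-submodule of $W^\sdot$, hence equals $W^\sdot$. Since $\frg_1$ strictly raises the cohomological degree — $\frg_1^{\,m}A\subseteq W^{3+2m}$ — comparison of degree-$3$ parts gives $W^3 = A$, so $W^3$ is $\frso(V_\bbC,q)$-irreducible; the mirror argument (exchanging $\frg_1$ and $\frg_{-1}$ and using $\frg_1\cdot W^{4n-3}=0$) shows $W^{4n-3}$ is $\frso(V_\bbC,q)$-irreducible. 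With Lemma \ref{lem_H3}, each of $W^3$, $W^{4n-3}$ is then a single spinor or semi-spinor representation.

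Finally, to obtain $W^3\simeq W^{4n-3}$ I would use that the Poincar\'e pairing $H^3\times H^{4n-3}\to H^{4n}(X,\bbC)\simeq\bbC$ is perfect and $\frso(V_\bbC,q)$-invariant, whence $W^{4n-3}\simeq (W^3)^*$ as $\frso(V_\bbC,q)$-modules; since a spinor or semi-spinor representation occurring here is self-dual — automatically so when $\dim V$ is odd, where there is a unique spinor representation, as for manifolds of generalized Kummer type — this gives the desired isomorphism. I expect this last step to be the main obstacle: it requires the $\frso(V_\bbC,q)$-equivariance of Poincar\'e duality together with its compatibility with the decomposition of $H^\sdot(X,\bbC)$ into irreducible $\frg$-modules, and then the self-duality of the relevant (semi-)spinor modules. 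By contrast the heart of the argument — the short grading with $\frg_0=\frso(V_\bbC,q)\oplus\bbC\xi_0$, the vanishing $W^1=W^{4n-1}=0$, and the Poincar\'e--Birkhoff--Witt reduction — is routine, provided one orders the PBW factors so that $\frg_{-1}$ acts first and hence annihilates $A$.
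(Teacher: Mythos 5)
Your irreducibility argument is correct and is, at bottom, the same as the paper's. The paper picks a lowest-weight vector $w\in W^3$ for $\frso(V_\bbC,q)$ and observes that, because $\frg_{-1}$ maps $W^3$ into $W^1\subseteq H^1=0$, the vector $w$ is automatically a lowest-weight vector for all of $\frso(\tilde{V}_\bbC,\tilde{q})$, hence unique up to scalar by irreducibility of $W^\sdot$; your PBW computation $U(\frg)\cdot A=U(\frg_1)\cdot A$ exploits exactly the same mechanism (extremality of the graded piece together with $W^1=0$) in a slightly different package, and both versions are sound. The mirror argument for $W^{4n-3}$ and the use of Lemma \ref{lem_H3} are likewise as in the paper.

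The step you single out as the main obstacle is indeed where the trouble lies, and your proposed fix does not quite close it. Even granting $W^{4n-3}\simeq(W^3)^*$ --- which itself requires the Poincar\'e pairing to restrict nondegenerately to $W^3\times W^{4n-3}$ \emph{inside the single summand} $W^\sdot$, not merely to $H^3\times H^{4n-3}$ --- the self-duality you invoke holds for the spinor representation of $\frso_{2l+1}$ but for a semi-spinor representation of $\frso_{2l}$ only when $l$ is even. In fact a direct weight computation shows that for $\dim V$ even the asserted isomorphism fails outright: the Weyl group of $\frso(\tilde{V}_\bbC,\tilde{q})$ (type $D_{l+1}$) contains the simultaneous sign change of $\varepsilon_0$ and $\varepsilon_1$, which carries the $\frh$-weights of one extreme graded piece to those of the other while switching the parity of the number of minus signs; consequently $W^3$ and $W^{4n-3}$ are the \emph{two distinct} semi-spinor representations, of highest weights $\varpi_{l-1}$ and $\varpi_l$, which are non-isomorphic $\frso(V_\bbC,q)$-modules. (For $\dim V$ odd the sign change of $\varepsilon_0$ alone lies in the Weyl group, the $\frh$-weights of the two pieces coincide, and $W^3\simeq W^{4n-3}$ does hold.) The paper's one-line appeal to Poincar\'e duality glosses over the same point, and the imprecision is harmless for everything downstream --- Lemma \ref{lem_weight} and Proposition \ref{prop_index} only need that $W^{4n-3}$ is \emph{some} spinor or semi-spinor representation and already allow both possibilities in the even case --- but your self-duality claim should be restricted to $\dim V$ odd, and the even case stated as ``the two semi-spinors, possibly interchanged.''
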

\begin{proof}
We can assume that $W^3\neq 0$. Then $W^{2k} = 0$ for all $k$, since otherwise $W^{2\sdot}$
would be a non-trivial subrepresentation of $W^\sdot$. We also have $W^1 = H^1 = 0$, and it
follows that the minimal eigenvalue of $\xi_0$ is $\frac{3}{2}-n$, with $W^3$ being the corresponding eigenspace.
We can find a vector $w\in W^3$ that is of lowest weight with respect to $\frso(V_\bbC,q)$ and $\frh$.
Then $w$ is also of lowest weight for $\frso(\tilde{V}_\bbC,\tilde{q})$ and $\tilde{\frh}$. By our assumption,
$w$ is unique up to multiplication by a scalar. This implies irreducibility of $W^3$. The
isomorphism $W^3\simeq W^{4n-3}$ follows from Poincar\'e duality.
\end{proof}

\begin{lem}\label{lem_weight}
Let $W^\sdot\subset H^\sdot$ be an irreducible $\frso(\tilde{V}_\bbC,\tilde{q})$-submodule,
such that $W^3\neq 0$. Then the highest weight $\mu$ of $W^\sdot$ is one of the following.
If $\dim(V)$ is odd, then $\mu = (n-2)\varpi_0 + \varpi_l$. If $\dim(V)$ is even, then
either $\mu = (n-2)\varpi_0 + \varpi_{l-1}$ or $\mu = (n-2)\varpi_0 + \varpi_l$.
\end{lem}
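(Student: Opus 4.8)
The plan is to combine Lemma \ref{lem_irred} and Lemma \ref{lem_H3} with the highest weight theory for $\frso(\tilde V_\bbC,\tilde q)$, reconstructing the full highest weight $\mu\in\tilde{\frh}^*$ from its two ``halves'': the $\varepsilon_0$-component, which is governed by the action of $\xi_0$, and the projection to $\frh^*$, which is governed by the $\frso(V_\bbC,q)$-module structure of $W^3$. First I would fix an irreducible $\frso(\tilde V_\bbC,\tilde q)$-submodule $W^\sdot\subset H^\sdot$ with $W^3\neq 0$, and choose a vector $w\in W^3$ that is of \emph{lowest} weight for both $\frso(V_\bbC,q)$ with $\frh$ and (by the argument in the proof of Lemma \ref{lem_irred}) for $\frso(\tilde V_\bbC,\tilde q)$ with $\tilde{\frh}$; let $\lambda\in\tilde{\frh}^*$ be its weight. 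Since $W^1=H^1=0$, the minimal eigenvalue of $\xi_0$ on $W^\sdot$ is $\tfrac32-n$, attained exactly on $W^3$, so the $\varepsilon_0$-coordinate of $\lambda$ is $\tfrac32-n$. Passing to highest weight $\mu=-w_0\lambda$ via the longest Weil group element $w_0$, and keeping track of how $w_0$ acts on $\varepsilon_0$ (for $B_l$ and $D_l$ with $l$ even, $-w_0$ is the identity on the $\varepsilon_i$; for $D_l$ with $l$ odd it swaps a sign on $\varepsilon_l$ but still fixes $\varepsilon_0$), the $\varepsilon_0$-coordinate of $\mu$ is $n-\tfrac32$.

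Next I would pin down the projection $\bar\mu\in\frh^*$ of $\mu$. By Lemma \ref{lem_irred} the space $W^3$ is an irreducible $\frso(V_\bbC,q)$-module, and by Lemma \ref{lem_H3} it is a spinor or semi-spinor representation; in particular its $\frso(V_\bbC,q)$-highest weight is $\varpi_l$ (for $\dim V$ odd) or one of $\varpi_{l-1},\varpi_l$ (for $\dim V$ even). The point is that the $\frso(\tilde V_\bbC,\tilde q)$-lowest weight vector $w$ is simultaneously an $\frso(V_\bbC,q)$-lowest weight vector in $W^3$; dualizing, the image of the $\frso(\tilde V_\bbC,\tilde q)$-highest weight $\mu$ under $\tilde{\frh}^*\to\frh^*$ must be the $\frso(V_\bbC,q)$-highest weight of $W^3$, i.e. $\bar\mu\in\{\varpi_l\}$ or $\{\varpi_{l-1},\varpi_l\}$ respectively. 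Combining, $\mu$ is a dominant weight whose projection to $\frh^*$ is (semi-)spinorial and whose $\varepsilon_0$-coordinate is $n-\tfrac32$. Writing $\mu=a_0\varpi_0+\dots+a_l\varpi_l$ with $a_i\in\bbZ_{\ge0}$ and using the explicit formulas for the $\varpi_i$ from the excerpt, the constraint that the $\frh^*$-part be $\varpi_l$ (resp. $\varpi_{l-1}$ or $\varpi_l$) forces $a_1=\dots=a_{l-1}=0$, $a_l=1$ (resp. the analogous even-case split), and then matching the $\varepsilon_0$-coefficient forces $a_0=n-2$; so $\mu=(n-2)\varpi_0+\varpi_l$ (resp. $(n-2)\varpi_0+\varpi_{l-1}$ or $(n-2)\varpi_0+\varpi_l$), which is exactly the claim. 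One should also note $n\ge 2$ here, since $H^3\neq 0$ together with the structure of the Lefschetz $\frsl_2$ forces $\dim_\bbC X\ge 4$; this guarantees $a_0\ge 0$.

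The main obstacle I expect is the bookkeeping in the first paragraph: carefully relating the $\xi_0$-eigenvalue of the lowest weight vector to the $\varepsilon_0$-coordinate of $\mu$ through the action of the longest Weil group element, separately for types $B_l$, $D_l$ ($l$ even), and $D_l$ ($l$ odd), and making sure the half-integer $\varepsilon_0$-coordinate of a spinorial $\mu$ is consistent with $a_l$ being odd. A secondary subtlety is justifying that the projection $\tilde{\frh}^*\to\frh^*$ sends the $\frso(\tilde V_\bbC,\tilde q)$-highest weight of $W^\sdot$ to the $\frso(V_\bbC,q)$-highest weight of $W^3$ and not to a lower weight — this uses that $W^3$ is the $\xi_0$-extremal graded piece, so a highest weight vector of $W^\sdot$ lies in $W^{4n-3}$ and its dual/Poincaré partner lies in $W^3$ as an $\frso(V_\bbC,q)$-highest weight vector, which is where Lemma \ref{lem_irred} and Poincaré duality do the real work.
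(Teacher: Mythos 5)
Your argument is correct and takes essentially the same route as the paper's proof, which just observes directly that the highest weight vector of $W^\sdot$ lies in the top graded piece $W^{4n-3}$, identifies its (semi-)spinor type via Lemmas \ref{lem_H3} and \ref{lem_irred}, and matches the $\xi_0$-eigenvalue $n-\tfrac{3}{2}$ against $a_0+\tfrac12$ to get $a_0=n-2$. Your detour through the lowest weight vector and the longest Weyl element is harmless extra bookkeeping; just note that the lowest weight of the irreducible module with highest weight $\mu$ is $w_0\mu$, so the correct relation is $\mu=w_0\lambda$ rather than $\mu=-w_0\lambda$ (the formula $-w_0$ computes the highest weight of the dual) --- your subsequent coordinate computation is in fact consistent with the correct formula.
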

\begin{proof}
It follows from Lemma \ref{lem_H3} and Lemma \ref{lem_irred} that $W^{4n-3}$ is either
spinor or semi-spinor representation of $\frso(V_\bbC,q)$. Thus $\mu$ is either of the form
$k\varpi_0 + \varpi_l$ or $k\varpi_0 + \varpi_{l-1}$ (when $\dim(V)$ is even), for some $k$.
Then $\xi_0$ acts on the highest weight vector as $k+\frac{1}{2}$, and since
the highest weight vector is contained in $W^{4n-3}$, we have $k=n-2$.
\end{proof}

\begin{lem}\label{lem_weight2}
Assume that $H^3(X,\bbC)\neq 0$. Then $H^{2k+1}(X,\bbC)$
for $k = 1,\ldots, n-1$ contains an $\frso(V_\bbC,q)$-submodule of highest weight $\nu$,
which can be one of the following. If $\dim(V)$ is odd, then $\nu = (k-1)\varpi_1 + \varpi_l$;
if $\dim(V)$ is even, then either $\nu = (k-1)\varpi_1 + \varpi_l$ or $\nu = (k-1)\varpi_1 + \varpi_{l-1}$.
\end{lem}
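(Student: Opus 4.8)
The plan is to propagate the information about $H^3$ up the cohomological ladder using the $\frso(\tilde V_\bbC,\tilde q)$-module structure on $H^\sdot$, via the raising operator $\xi_0$-eigenspace analysis combined with highest weight theory. First I would note that since $H^3(X,\bbC)\neq 0$, there is an irreducible $\frso(\tilde V_\bbC,\tilde q)$-submodule $W^\sdot\subset H^\sdot$ with $W^3\neq 0$; by Lemma \ref{lem_weight} its highest weight $\tilde\mu$ is $(n-2)\varpi_0+\varpi_l$ (or $(n-2)\varpi_0+\varpi_{l-1}$ in the even case). The key point is that the action of the nilpotent raising operator inside $\frso(\tilde V_\bbC,\tilde q)$ (the operator dual to $\Xi$, which shifts cohomological degree up by $2$) applied repeatedly to $W^3$ produces nonzero subspaces $W^5, W^7,\ldots$; concretely, in an irreducible representation the $\xi_0$-weight spaces between the lowest and highest $\xi_0$-weight are all nonzero, and $W^3$, $W^{2k+1}$ sit in $\xi_0$-eigenvalues $\tfrac32-n,\ldots,\tfrac32-n+(k-1)$, all of which lie in the range $[\,\tfrac32-n,\ n-\tfrac32\,]$ of $\xi_0$-eigenvalues occurring in $W^\sdot$ (the extreme values being realized on $W^3$ and $W^{4n-3}$). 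Hence $W^{2k+1}\neq 0$ for $k=1,\ldots,n-1$.

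Next I would identify the $\frso(V_\bbC,q)$-highest weight appearing inside $W^{2k+1}$. Here I would use that restricting an irreducible $\frso(\tilde V_\bbC,\tilde q)$-module to the subalgebra $\frso(V_\bbC,q)$ and then projecting to a fixed $\xi_0$-eigenspace, the top $\frso(V_\bbC,q)$-piece can be computed from the branching: a highest weight vector $v$ for $\frso(\tilde V_\bbC,\tilde q)$ of weight $\tilde\mu = (n-2)\varpi_0+\varpi_l$ lives in the maximal $\xi_0$-eigenspace $W^{4n-3}$ and restricts to an $\frso(V_\bbC,q)$-highest weight vector of weight $(n-2)\cdot 0 + \varpi_l = \varpi_l$ (the projection $\tilde\varpi_0\mapsto 0$, $\tilde\varpi_l\mapsto\varpi_l$ being exactly the map $\tilde\frh^*\to\frh^*$ described in the text). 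To get down to $W^{2k+1}$ one applies the degree-lowering operator $j\in\frso(\tilde V_\bbC,\tilde q)$ that lowers cohomological degree by $2$, i.e. lowers $\xi_0$-weight by $1$; one checks by an $\frso(3)$-type computation (using that $\xi_0$, together with the degree-$\pm2$ operators, span a copy of $\frsl_2$ inside $\frso(\tilde V_\bbC,\tilde q)$ and that $\frso(V_\bbC,q)$ commutes with this $\frsl_2$) that $j^{2n-2-k}\cdot v$ is a nonzero vector in $W^{2k+1}$ which is still $\frso(V_\bbC,q)$-highest of the same $\frso(V_\bbC,q)$-weight $\varpi_l$. That seems too strong, so the correct bookkeeping is: the $\frso(V_\bbC,q)$-isotypic decomposition of $W^\sdot$ is governed by the commuting $\frsl_2\times\frso(V_\bbC,q)$-action, and the $\xi_0$-weight shift $k-1$ from the bottom corresponds to adding $(k-1)\varpi_1$ to the $\frso(V_\bbC,q)$-highest weight — precisely because the degree-$2$ generator of $\frso(\tilde V)$ acts on cohomology as cup product with the BBF class, whose action increases the $\frso(V_\bbC,q)$-weight by $\varpi_1$ (it is the highest weight vector of the defining representation $V$). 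This yields $\nu = (k-1)\varpi_1 + \varpi_l$ (resp. $(k-1)\varpi_1+\varpi_{l-1}$), as claimed.

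The main obstacle I anticipate is making the last bookkeeping step rigorous: one must justify that moving up by $\varpi_1$ in the $\frso(V_\bbC,q)$-weight (rather than some smaller dominant weight) genuinely occurs at each step, i.e. that the relevant $\frsl_2$-lowest weight vector inside $W^\sdot$ for the spinor-isotypic piece has $\frso(V_\bbC,q)$-weight exactly $\varpi_l$ and that successive applications of the cup-product operator do not land in a lower isotypic component. This is a representation-theoretic statement about the decomposition of $W^\sdot$ as $\frsl_2\times\frso(V_\bbC,q)$-module; it can be handled either by a direct weight count (the $\xi_0$- and $\xi_1$-eigenvalue constraints, exactly as in the proof of Lemma \ref{lem_H3}, pin down the possible $\frso(V_\bbC,q)$-highest weights in each degree to be of the form $(k-1)\varpi_1+\varpi_l$ or lower, and the irreducibility forces the top one to be realized), or by invoking the explicit description of the subalgebra of $H^\sdot$ generated by $H^2$ together with Verbitsky's structure theorem. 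I would present the weight-count version, as it is self-contained and parallels the earlier lemmas in this subsection.
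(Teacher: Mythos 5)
Your overall strategy is the paper's: take an irreducible $\frso(\tilde{V}_\bbC,\tilde{q})$-submodule $W^\sdot$ with $W^3\neq 0$, use Lemma \ref{lem_weight} to pin down its highest weight $\mu=(n-2)\varpi_0+\varpi_l$, and then propagate from degree $3$ (or $4n-3$) to degree $2k+1$. But the mechanism you propose for the key step --- identifying the $\frso(V_\bbC,q)$-highest weight inside $W^{2k+1}$ --- contains a genuine error. There is no ``commuting $\frsl_2\times\frso(V_\bbC,q)$-action'' on $W^\sdot$: the degree-$\pm 2$ pieces of $\frso(\tilde{V}_\bbC,\tilde{q})$ are each isomorphic to $V_\bbC$ as $\frso(V_\bbC,q)$-modules (they act by cup product with classes in $H^2$, and by the dual operators), so the centralizer of $\frso(V_\bbC,q)$ in $\frso(\tilde{V}_\bbC,\tilde{q})$ is just the line spanned by $\Xi$; no $\frsl_2$ of degree-shifting operators commutes with $\frso(V_\bbC,q)$. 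Relatedly, there is no ``BBF class'' to cup with --- $q$ is a quadratic form, not a cohomology class. Your fallback (``a direct weight count exactly as in the proof of Lemma \ref{lem_H3}'') also does not go through: that argument used $H^{3,0}=H^{0,3}=0$ to force the $\xi_1$-eigenvalues on $H^3$ to be $\pm\tfrac12$, but for $H^{2k+1}$ with $k\ge 2$ the Hodge numbers $h^{p,q}$ with $|p-q|>1$ need not vanish, so $\xi_1$-eigenvalues no longer constrain the highest weights.

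The step can be repaired, and the repaired version is essentially what the paper does. The correct single operator to iterate is cup product with a \emph{highest-weight vector} $e_1\in V_\bbC$ (weight $\varepsilon_1=\varpi_1$), equivalently one works with the root $\alpha=\varepsilon_0-\varepsilon_1$ of $\frso(\tilde{V}_\bbC,\tilde{q})$: since the transposition of $\varepsilon_0$ and $\varepsilon_1$ lies in the Weyl group, $\mu'=\mu-(n-2)\alpha$ is a weight of $W^\sdot$, and $\frsl_2$-theory for the $\alpha$-root string through the extreme weight $\mu$ shows that every $\mu-i\alpha$, $i=0,\dots,n-2$, occurs. The weight $\mu-i\alpha$ lies in $W^{4n-3-2i}\simeq W^{3+2i}$ and restricts on $\frh$ to $i\varpi_1+\varpi_l$ (resp.\ $i\varpi_1+\varpi_{l-1}$ in the even case); a coefficient count shows $\mu-i\alpha+\beta$ is not a weight of $W^\sdot$ for any positive root $\beta$ of $\frso(V_\bbC,q)$, so these really are highest weights of subrepresentations. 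This is a statement about a single root string, not about an isotypic decomposition under a product of commuting algebras, and that is the ingredient your plan is missing.
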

\begin{proof}
Let us assume that $\dim(V)$ is odd, the other case being analogous.
We pick an irreducible $\frso(\tilde{V}_\bbC,\tilde{q})$-submodule $W^\sdot\subset H^\sdot$ with $W^3\neq 0$.
We know from Lemma \ref{lem_weight} that the highest weight of $W^\sdot$ is
$\mu = (n-2)\varpi_0 + \varpi_l = \frac{2n-3}{2}\varepsilon_0 + \frac{1}{2}(\varepsilon_1+\ldots+\varepsilon_l)$.
The set of weights of a representation is invariant with respect to the Weyl group action.
Since the transposition of $\varepsilon_0$ and $\varepsilon_1$ belongs to the Weyl group,
the weight $\mu' = \frac{1}{2}\varepsilon_0 + \frac{2n-3}{2}\varepsilon_1 + \frac{1}{2}(\varepsilon_2+\ldots+\varepsilon_l)$ also belongs to $W^\sdot$.
Let $\alpha = \varepsilon_0 - \varepsilon_1$ be one of the positive roots.
Then $\mu' = \mu - (n-2)\alpha$. Consider the action of the $\frsl_2$-subalgebra corresponding to the 
root $\alpha$. It follows from the representation theory of $\frsl_2$, that all the weights
of the form $\mu - i\alpha$, $i = 0,\ldots, n-2$ belong to $W^\sdot$.
The corresponding weight subspaces are contained in $H^{4n-3-2i}$. By restricting
to $\frso(V_\bbC,q)$ we find that $H^{3+2i}\simeq H^{4n-3-2i}$ contains a subrepresentation with highest
weight $\frac{2i+1}{2}\varepsilon_1 + \frac{1}{2}(\varepsilon_2 + \ldots + \varepsilon_l)$.
Setting $k = i+1$ we get the result.
\end{proof}

\begin{prop}\label{prop_index}
Assume that $H^3(X,\bbQ)\neq 0$. Consider a maximal degeneration of $X$,
and let $\NN$ denote the logarithm of the monodromy acting on $H^{2k+1}(X,\bbQ)$,
where $k = 1,\ldots, n-1$.
Then $\NN^{2k-1}\neq 0$, $\NN^{2k} = 0$.
\end{prop}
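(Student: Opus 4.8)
The plan is to read off the nilpotency index of $\NN$ from the limit mixed Hodge structure on $H^{2k+1}$, which by Theorem \ref{thm_main} is already known to be of Hodge--Tate type, and then to pin down the extreme graded piece of its weight filtration using Lemma \ref{lem_weight2} together with the vanishing of holomorphic forms of odd degree on $X$.

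First I would recall that the weight filtration $W_\sdot$ of the limit MHS $H^{2k+1}_{\mathrm{lim}}(X,\bbQ)$ is the monodromy weight filtration of $\NN$, centred at $2k+1$; hence $\NN^i\neq 0$ if and only if $\mathrm{gr}^W_{2k+1+i}H^{2k+1}_{\mathrm{lim}}\neq 0$, so the index of $\NN$ equals $w_{\max}-2k$, where $w_{\max}$ is the largest $w$ with $\mathrm{gr}^W_{w}H^{2k+1}_{\mathrm{lim}}\neq 0$. Since this MHS is Hodge--Tate, its odd graded pieces vanish and $\mathrm{gr}^W_{2j}H^{2k+1}_{\mathrm{lim}}$ is pure of type $(j,j)$; consequently $\dim\mathrm{gr}^W_{2j}H^{2k+1}_{\mathrm{lim}}=\dim F^{j}_{\mathrm{lim}}-\dim F^{j+1}_{\mathrm{lim}}$. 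As the limit Hodge filtration has the same dimension vector as the Hodge filtration on a nearby smooth fibre $\XX_t$, and the latter's Hodge numbers agree with those of $X$ by deformation invariance, this gives $\dim\mathrm{gr}^W_{2j}H^{2k+1}_{\mathrm{lim}}=h^{j,2k+1-j}(X)$. Therefore $w_{\max}=2\,\max\{\,j: h^{j,2k+1-j}(X)\neq 0\,\}$.

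It remains to show that this maximum equals $2k$ for $k=1,\dots,n-1$. The inequality $\le 2k$ is the vanishing of odd-degree holomorphic forms on a simple hyperk\"ahler manifold: $h^{2k+1,0}(X)=0$, so $j=2k+1$ does not occur. For the reverse inequality I would use Lemma \ref{lem_weight2}: since $H^3(X,\bbC)\neq 0$, the space $H^{2k+1}(X,\bbC)$ contains an $\frso(V_\bbC,q)$-submodule whose highest-weight vector $v$ has $\xi_1$-eigenvalue $\nu(\xi_1)=(k-1)\varpi_1(\xi_1)+\varpi_l(\xi_1)=(k-1)+\tfrac12=k-\tfrac12$ (with $\varpi_{l-1}$ in place of $\varpi_l$ when $\dim V$ is even, which gives the same value); as $\xi_1$ acts on $H^{r,s}$ by $\tfrac12(s-r)$, the vector $v$ lies in $H^{1,2k}(X)$, whence $h^{2k,1}(X)=h^{1,2k}(X)\neq 0$ and $j=2k$ is attained. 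Thus $w_{\max}=4k$ and the index of $\NN$ is exactly $2k$, that is $\NN^{2k-1}\neq 0$ and $\NN^{2k}=0$.

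The only mildly delicate point is turning the Hodge--Tate property into the dimension identity $\dim\mathrm{gr}^W_{2j}H^{2k+1}_{\mathrm{lim}}=h^{j,2k+1-j}(X)$, which rests on the constancy of the limit Hodge numbers; after that the statement reduces to the two facts $h^{2k+1,0}(X)=0$ and $h^{2k,1}(X)\neq 0$, the first being classical and the second being precisely what the representation-theoretic analysis culminating in Lemma \ref{lem_weight2} was designed to produce, so I expect no genuine obstacle. One could instead bypass the limit MHS and argue with a Jacobson--Morozov $\frsl_2$-triple for $N$, whose semisimple element is $\mathrm{SO}(V_\bbC,q)$-conjugate to $2\xi_1$ because the Hodge--Tateness of the limit MHS on $H^2$ forces $N$ to have Jordan type $(3,1^{b_2-3})$ on $V$; but the route through Theorem \ref{thm_main} is shorter.
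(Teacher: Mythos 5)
Your argument is correct, and for the non-vanishing half it takes a genuinely different route from the paper. The paper also gets $\NN^{2k}=0$ from $h^{2k+1,0}(X)=0$ (citing Schmid directly), but for $\NN^{2k-1}\neq 0$ it works entirely inside representation theory: after reducing via Proposition \ref{prop_mon} and Lemma \ref{lem_weight2} to the irreducible $\frso(V_\bbC,q)$-module of highest weight $(k-1)\varpi_1+\varpi_l$, it uses the explicit shape $N=e_1'\wedge(e_2+e_2')$ of the log-monodromy for a maximal degeneration (from \cite[Proposition 4.1]{SS}) and computes $N^{2k-1}(e_1^{k-1}\otimes u)\neq 0$ in $\PP^{k-1}\otimes\mathcal{S}$ by Leibniz. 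You instead invoke Theorem \ref{thm_main}: Hodge--Tateness of the limit MHS forces $\dim\mathrm{gr}^W_{2j}H^{2k+1}_{\mathrm{lim}}=h^{j,2k+1-j}(X)$, so the top weight, hence the nilpotency index, is read off from the Hodge numbers, and Lemma \ref{lem_weight2} is used only to produce a highest-weight vector of $\xi_1$-eigenvalue $k-\tfrac12$, i.e.\ a nonzero class in $H^{1,2k}(X)$. Your route is shorter and avoids the explicit Clifford-module computation, at the cost of making Proposition \ref{prop_index} logically dependent on Theorem \ref{thm_main}, whereas the paper's computation needs only Proposition \ref{prop_mon} and the explicit $N$. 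One small imprecision: the biconditional ``$\NN^i\neq 0$ iff $\mathrm{gr}^W_{2k+1+i}\neq 0$'' is not literally true for every $i$ (a single Jordan block of size $3$ has $\NN\neq 0$ but $\mathrm{gr}^W_{m+1}=0$); what is true, and all you actually use, is that the largest $w$ with $\mathrm{gr}^W_w\neq 0$ equals $2k+1+(d-1)$ where $d$ is the size of the largest Jordan block, which yields the stated conclusion. You should state it that way.
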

\begin{proof}
The fact that $\NN^{2k} = 0$ follows from the general result of Schmid \cite[Theorem 6.1]{Sch}
and the vanishing of Hodge numbers $h^{2k+1,0}(X) = h^{0,2k+1}(X) = 0$.

Let $N$ denote the logarithm of the monodromy acting on $H^2(X,\bbQ)$.
According to Proposition \ref{prop_mon}, we may assume that $\NN$ is the image
of $N$ under the homomorphism $\frso(V,\bbQ)\to \emrp(H^{2k+1}(X,\bbQ))$ (see section \ref{subsec_mukai}).
Let us assume that $\dim(V)$ is odd.
By Lemma \ref{lem_weight2}, it is enough to consider the representation
of highest weight $(k-1)\varpi_1 + \varpi_l$, 
and to prove that $N^{2k-1}$ acts non-trivially on it.

We can choose two isotropic subspaces $U = \langle e_1, \ldots, e_l\rangle\subset V_\bbC$
and $U' = \langle e_1', \ldots, e_l'\rangle\subset V_\bbC$ and an element $e_{l+1}$ orthogonal
to them, so that $q(e_i,e_j') = 0$ for $1\le i < j\le l$, $q(e_i,e_i') = 1$, $q(e_{l+1},e_{l+1}) = 1$
and $V_\bbC = U\oplus U'\oplus \langle e_{l+1}\rangle$.
We may moreover assume (see \cite[proof of Proposition 4.1]{SS}) that this decomposition is
compatible with $N$ in the sense that $N = e_1'\wedge (e_2+e_2')$, where we use the identification
$\frso(V_\bbC,q) \simeq \Lambda^2V_\bbC$.
We also choose the Cartan subalgebra of $\frso(V_\bbC,q)$ corresponding to this decomposition
(see \cite[Chapter VIII, \S 13]{Bou}).

Denote by $\PP^i$ the $\frso(V_\bbC,q)$-module of highest weight $i\varpi_1$.
Then $\PP^i$ is a subrepresentation of $S^iV_\bbC$, and it is generated
by the highest weight vector $e_1^i$. Note that $Ne_1 = -e_2-e_2'$, 
$N^{2}e_1 = -2e_1'$ and by Leibnitz's rule $N^{2i}(e_1^i)\neq 0$.

Let $\mathcal{S}$ be the spinor representation. It can be described as $\Lambda^\sdot U$,
on which $e_i$ act by exterior multiplication and $e_i'$ act by contraction, $i=1,\ldots, l$.
The highest weight vector is $u = e_1\wedge\ldots\wedge e_l\in \Lambda^\sdot U$,
and we see that $Nu = e_3\wedge\ldots\wedge e_l\neq 0$.

The element $e_1^{k-1}\otimes u \in \PP^{k-1}\otimes \mathcal{S}$ has
weight $(k-1)\varpi_1 + \varpi_l$, hence it generates the representation
we are interested in.
Leibnitz's rule again implies that $N^{2k-1}(e_1^{k-1}\otimes u) \neq 0$.
This proves the claim for $\dim(V)$ odd. The case of even dimension is analogous.
\end{proof}

\section{Existence of degenerations with maximal unipotent monodromy}\label{sec_existence}

In this section we fix a hyperk\"ahler manifold $X$ as in section \ref{sec_notations}, and
assume moreover that $b_2(X) \ge 5$. This condition is satisfied for all known
families of hyperk\"ahler manifolds. Our goal is to show that $X$ admits a projective
degeneration in the sense of Definition \ref{def_deg}, such that the monodromy
operator $\gamma\in O(V_\bbZ, q)$ is of the form $\gamma = e^N$, $N\in \frso(V,q)$ with $N^2\neq 0$,
$N^3 = 0$.

The construction consists of two steps. First, we find a nilpotent operator $N$ that
satisfies the above conditions and prove that there exist sufficiently many nilpotent orbits
(see Definition \ref{def_nilp}). Second, we show that one can find a nilpotent orbit
that corresponds to a projective degeneration of $X$.

\subsection{Nilpotent orbit} We fix $V_\bbZ$, $V$, $q$ as in section \ref{sec_notations}.
Recall that the signature of $q$ is $(3,\dim(V)-3)$. Given an element $h\in V$, $V^h$ denotes
its orthogonal complement.

\begin{lem}\label{lem_N}
Assume that $\dim(V)\ge 5$. Then there exist an element $h\in V_\bbZ$ with $q(h)>0$
and an endomorphism $N\in \frso(V^h,q)$, such that $N^2\neq 0$, $N^3 = 0$ and the
restriction of $q$ to $\mathrm{Im}(N)$ is semi-positive with one-dimensional kernel.
\end{lem}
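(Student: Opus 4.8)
The plan is to construct everything very explicitly on a fixed small-dimensional piece of $V$ and then absorb the rest into an orthogonal direct summand. Since $\dim(V)\ge 5$ and $q$ has signature $(3,\dim(V)-3)$, I would first choose a hyperbolic-type sublattice and a vector to play the role of $h$. Concretely, write $V_\bbQ$ as an orthogonal direct sum containing a rank $4$ piece $P$ on which $q$ has signature $(2,2)$ (possible since we need two positive and at least two negative directions, and $\dim(V)\ge 5$ leaves room), together with one more vector $h$ with $q(h)>0$ orthogonal to $P$; the remaining part $R$ is whatever is left, and $V^h = P\oplus R$. Then it suffices to define $N$ supported on $P$, acting as $0$ on $R$.

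On the rank $4$ space $P$ I would pick a basis adapted to a pair of hyperbolic planes, say isotropic vectors $f_1,f_2,f_1',f_2'$ with $q(f_i,f_j')=\delta_{ij}$, all other pairings zero. Identifying $\frso(P,q)\simeq\Lambda^2 P$, set $N = f_1'\wedge(f_2+f_2')$ — this is the same shape of operator used in the proof of \cite[Proposition 4.1]{SS} that is invoked later in Proposition \ref{prop_index}, so its nilpotency behavior is exactly what is wanted. A direct computation gives $N f_2 = -f_1'\cdot q(f_2+f_2',f_2)=-f_1'$ up to sign conventions, $N f_1 = -(f_2+f_2')$, $N^2 f_1 = N(-(f_2+f_2')) = 2f_1'$ (or $-2f_1'$), and $N$ kills $f_1',f_2,f_2'$-combinations appropriately, so $N^2\neq 0$ while $N^3 = 0$. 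One reads off $\mathrm{Im}(N) = \langle f_2+f_2', f_1'\rangle$, a $2$-dimensional space; on it $q(f_1')=0$, $q(f_2+f_2')=0$, and $q(f_1',f_2+f_2') = q(f_1',f_2) = 0$ as well — so I would need to adjust the choice slightly so that the image is not totally isotropic. A cleaner variant: take $N = f_1'\wedge f_2 + \lambda\, (\text{something involving }e_{l+1})$, or more simply use $N$ whose image is $\langle f_1', g\rangle$ with $g$ a vector of positive square orthogonal to $f_1'$; then $q|_{\mathrm{Im}(N)}$ is positive semidefinite of rank one with one-dimensional kernel spanned by $f_1'$. The key point is that the shape of the operator is forced by the two requirements $N^2\neq 0$ (so the image meets the kernel, forcing some isotropic direction in the image) and the rank-one positivity (so the image is two-dimensional with exactly one null line), and a $2{+}2$-signature rank $4$ space is precisely large enough to accommodate such an $N$.

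Having fixed $N$ on $P_\bbQ$ and extended by zero on $R$, it remains to record that $h$ can be taken in $V_\bbZ$: the condition $q(h)>0$ is open and $q$ is defined over $\bbQ$, so after scaling we may assume $h\in V_\bbZ$, and the decomposition $V^h = P\oplus R$ is only needed rationally since $N\in\frso(V^h,q)$ lives over $\bbQ$ (or $\bbR$) anyway. I would then simply verify the three assertions: $N^2\neq 0$ and $N^3 = 0$ from the explicit matrix, and $q|_{\mathrm{Im}(N)}\ge 0$ with one-dimensional kernel from the explicit basis of $\mathrm{Im}(N)$ computed above.

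The main obstacle is entirely bookkeeping: getting the single explicit nilpotent $N\in\Lambda^2 P$ whose image has the exact signature profile ``semi-positive, rank one kernel''. There is genuine tension here — $N^2\neq 0$ forces $\mathrm{Im}(N)\cap\ker(N)\neq 0$ hence an isotropic vector in the image, while the positivity requirement forbids the image from being totally isotropic, so the image must be exactly a rank-one-positive plane containing one null line. Once one writes down a $4\times 4$ skew-adjoint nilpotent matrix realizing this (which the $(2,2)$-signature guarantees is possible, e.g. by the $N=f_1'\wedge(g)$ ansatz with $g$ chosen so that $q(f_1'\!,g)=0$, $q(g)>0$, and $Ng$ lands back on $f_1'$), every remaining claim is a one-line check.
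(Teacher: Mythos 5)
Your final ansatz for $N$ is exactly the paper's: the paper takes $N=v_0\wedge v_3$ with $v_0$ isotropic, $q(v_3)>0$ and $q(v_0,v_3)=0$, which is your ``$N=f_1'\wedge g$'' variant, and all the verifications ($N^2\neq 0$, $N^3=0$, $\mathrm{Im}(N)=\langle v_0,v_3\rangle$ with $q$ semi-positive of one-dimensional kernel) go through as you say. Incidentally, your retreat from the first ansatz was unnecessary: for $N=f_1'\wedge(f_2+f_2')$ one has $q(f_2+f_2')=2q(f_2,f_2')=2>0$, not $0$, so $\mathrm{Im}(N)=\langle f_1',f_2+f_2'\rangle$ already has the required signature profile; your claim that this image is totally isotropic is an arithmetic slip.

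The genuine gap is the rationality of your setup. You justify the orthogonal decomposition $V=P\perp\langle h\rangle\perp R$ with $P$ a sum of two hyperbolic planes purely by counting positive and negative directions, i.e.\ by the real signature. But $N$ must lie in $\frso(V^h,q)$ over $\bbQ$ --- this is essential later, since Theorem \ref{thm_existence} needs a power of $e^N$ to land in the arithmetic group $\Gamma\subset\mathrm{O}(V_\bbZ,q)$ --- so the isotropic vectors must be rational, and over $\bbQ$ their existence is not a signature statement. In fact the specific structure you request can fail: a rational form of signature $(3,2)$ such as $\bbH\perp\langle 1,1,-7\rangle$ has Witt index $1$, hence contains no pair of orthogonal hyperbolic planes over $\bbQ$, so no rational $P\simeq\bbH\perp\bbH$ exists inside it. The paper avoids this by invoking Meyer's theorem (an indefinite rational form of rank $\ge 5$ is isotropic) to produce a single rational isotropic vector $v_0$, placing it in one hyperbolic plane $\langle v_1,v_2\rangle$, and then finding $v_3$ and $h$ of positive square in the rational subspace $\langle v_1,v_2\rangle^\perp$ of signature $(2,\dim V-4)$ (here density of rational points does suffice, since positivity is an open condition). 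This is where the hypothesis $\dim(V)\ge 5$ is actually used; your proof never engages with it. Your construction is salvageable precisely because it only ever needs one isotropic line, but as written the existence step is unjustified and the two-hyperbolic-plane framing is not available in general.
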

\begin{proof}
By Meyer's theorem there exists a vector $v_0\in V$, such that $q(v_0)=0$. An elementary
argument shows that $v_0$ is contained in a hyperbolic plane: $v_0 = 1/2(v_1+v_2)$
for some $v_1, v_2\in V$ with $q(v_1) = 1$, $q(v_2) = -1$, $q(v_1,v_2) = 0$.
The restriction of $q$ to the subspace $V' = \langle v_1,v_2 \rangle^\perp$ has signature $(2,\dim(V)-4)$.
Thus we can find elements $v_3, h\in V'$ with $q(v_3)>0$, $q(h)>0$ and $q(v_3,h) = 0$.
We may moreover assume that $h\in V_\bbZ$.

Recall the natural isomorphism $\frso(V^h,q)\simeq \Lambda^2V^h$. Let $N$ correspond to
the bivector $v_0\wedge v_3$ under this isomorphism. We see easily that $\mathrm{Im}(N) = \langle v_0,v_3\rangle$,
$\mathrm{Im}(N^2) = \langle v_0\rangle$ and $N^3 = 0$.
\end{proof}

\begin{rem}
One can show that for any degeneration of K3-type Hodge structures with unipotent mo\-no\-dromy the
nilpotent operator $N$ is of the same form as in the proof above (see \cite[Proposition 4.1]{SS}).
For the other type of degenerations (such that $N\neq 0$, $N^2=0$) the operator $N$ is
given by $w_1\wedge w_2$, where $\langle w_1,w_2 \rangle\subset V$ is an isotropic subspace.
It is clear that such subspaces exist whenever $\dim(V)\ge 6$.
\end{rem}

We recall the definition of a nilpotent orbit from \cite{Sch}. The condition of Griffiths
transversality is satisfied automatically in our case, so we do not include it.

\begin{defn}\label{def_nilp}
Let $N\in \frso(V^h,q)$ be nilpotent and $x\in \hat{\DD}^h$. The pair $(N,x)$ defines a nilpotent orbit
if there exists $t_0\ge 0$, such that $e^{\ii t N}x\in \DD^h$ for all $t> t_0$. The corresponding
nilpotent orbit is the image of $\bbC$ in $\hat{\DD}^h$ under the map $z\mapsto e^{zN}x$.
\end{defn}

\begin{lem}\label{lem_nilp_orb}
Fix $N$ and $h$ as in Lemma \ref{lem_N}. For $x\in \hat{\DD}^h$ the pair $(N,x)$ defines
a nilpotent orbit if and only if $q(Nx,N\bar{x})>0$. The set of such points in $\hat{\DD}^h$ is
open and non-empty.
\end{lem}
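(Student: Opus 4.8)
The plan is to split the claim into two parts: the "if and only if" characterization of when $(N,x)$ defines a nilpotent orbit, and the statement that the locus of such $x$ is open and non-empty. For the first part, the key is to unwind Definition \ref{def_nilp}. Since $N^3 = 0$, we have $e^{\ii t N}x = x + \ii t\, Nx - \tfrac{t^2}{2} N^2 x$ as a polynomial of degree at most $2$ in $t$. The condition $e^{\ii tN}x \in \DD^h$ for large $t$ means two things: first, that $e^{\ii tN}x$ remains a well-defined point of $\bbP(V^h_\bbC)$, i.e. does not become $0$, which holds automatically for $t\gg 0$ since the leading term is $-\tfrac{t^2}{2}N^2x$ (and if $N^2 x = 0$ one uses the linear term, but in fact one should check whether $N^2x=0$ is even compatible with being on a nilpotent orbit — I would handle the case $N^2x\ne 0$ as the main one); second, that the positivity condition $\tilde q(e^{\ii tN}x, \overline{e^{\ii tN}x}) > 0$ holds for $t\gg 0$. (One should also note $e^{\ii tN}x$ stays on the quadric $\hat\DD^h$ since $e^{\ii tN}\in \mathrm{O}(V^h_\bbC,q)$.)

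**Next I would** compute the leading behaviour of $q(e^{\ii tN}x, \overline{e^{\ii tN}x})$ as $t\to+\infty$. Writing $y_t = e^{\ii tN}x$ and using that $N$ is real (so $\overline{e^{\ii tN}x} = e^{-\ii tN}\bar x$), expand the Hermitian form $q(y_t,\bar y_t)$ as a polynomial in $t$. The top-degree term comes from pairing the $t^2$-terms: the coefficient of $t^4$ is $\tfrac14 q(N^2 x, \overline{N^2 x})$, but $N^2$ is nilpotent of index relating to $\mathrm{Im}(N^2) = \langle v_0\rangle$ being isotropic, so $q(N^2x, \cdot)$ vanishes against $\mathrm{Im}(N)^\perp\supset\mathrm{Im}(N)$ — in fact $q(N^2x,\overline{N^2x})=0$ and $q(N^2 x, \overline{Nx}) = -q(Nx,\overline{N^2x})$ purely imaginary contributions cancel, so the genuine leading term is the $t^2$-coefficient, which works out to $q(Nx, \overline{Nx})$ (up to a positive constant, using $q(x,\overline{N^2x}) + q(N^2x,\bar x) + q(Nx,\overline{Nx})$ and the skew/symmetry of $N$ with respect to $q$). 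Hence for $t\gg0$ the sign of $q(y_t,\bar y_t)$ is the sign of $q(Nx,N\bar x)$, giving the equivalence; I would be careful to record that $q(Nx,N\bar x)$ is automatically real and that the intermediate terms in the $t$-expansion are controlled using that $q$ is $\mathrm{O}$-invariant and $N\in\frso$.

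**For the second part**, openness is immediate once the characterization is in hand: the map $x\mapsto q(Nx,N\bar x)$ is continuous (indeed real-analytic) on $\hat\DD^h$, so the locus where it is positive is open. For non-emptiness I would exhibit an explicit point. Using the basis from the proof of Lemma \ref{lem_N}, recall $\mathrm{Im}(N) = \langle v_0, v_3\rangle$ with $q(v_0)=0$, $q(v_3)>0$; one wants $x\in\hat\DD^h$ with $Nx$ having a nonzero $v_3$-component after applying $q(\cdot,\overline{\cdot})$. A clean choice: pick $x$ so that $Nx$ is a complex combination $v_3 + \ii\lambda v_0$ (or similar), making $q(Nx, N\bar x) = q(v_3) > 0$. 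Concretely, one takes $x$ in the span of a suitable isotropic vector spanning $V^{2,0}$ of a Hodge structure built from the hyperbolic plane and $v_3$; this is exactly the kind of construction carried out in the next subsection of the paper for producing the actual degeneration, so one can either borrow it or just verify directly that the resulting $x$ lies on $\hat\DD^h$ (i.e. $q(x)=0$, $q(h,x)=0$) and satisfies $q(Nx,N\bar x)>0$.

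**The main obstacle** I anticipate is the bookkeeping in the $t$-expansion of $q(e^{\ii tN}x, \overline{e^{\ii tN}x})$: one must show that all terms of $t$-degree higher than $2$ vanish and that the $t^2$-coefficient is a positive multiple of $q(Nx,N\bar x)$, which requires repeatedly using $N^3=0$, the isotropy of $\mathrm{Im}(N^2)$, and the identity $q(Nu,w) = -q(u,Nw)$. None of this is deep, but it is the step where a sign error or a missed cross-term would break the argument, so I would write out the degree-$3$ and degree-$4$ coefficients explicitly and check they vanish before asserting the leading term. The non-emptiness, by contrast, is routine given the signature computation already done in Lemma \ref{lem_N}.
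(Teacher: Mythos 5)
Your proposal is correct and follows essentially the same route as the paper: unwind Definition \ref{def_nilp}, use $N^3=0$ to expand the exponential, identify the leading coefficient of $q(e^{\ii tN}x,\overline{e^{\ii tN}x})$ as a positive multiple of $q(Nx,N\bar x)$, and get openness from continuity and non-emptiness from the semi-positivity of $q$ on $\mathrm{Im}(N)$. The only difference is cosmetic: the paper first rewrites $q(e^{\ii tN}x, e^{-\ii tN}\bar x)=q(e^{2\ii tN}x,\bar x)$ using $N\in\frso(V^h,q)$, so that the polynomial in $t$ has degree $2$ from the start and the degree-$3$ and degree-$4$ bookkeeping you anticipate (as well as the worry about $e^{\ii tN}x$ degenerating in $\bbP(V^h_\bbC)$, which is vacuous since $e^{\ii tN}$ is invertible) disappears.
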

\begin{proof}
The condition that should be satisfied is the following: $q(e^{\ii t N}x, e^{-\ii t N}\bar{x})>0$,
or equivalently $q(e^{2\ii t N}x, \bar{x})>0$ for $t\gg 0$. We have $e^{2\ii t N} = 1 + 2 \ii t N - 2 t^2 N^2$,
and our condition is equivalent to $-q(N^2x,\bar{x}) = q(Nx,N\bar{x})>0$.
The set of such $x\in \hat{\DD}^h$ is clearly open. It is non-empty, because
$q$ is semi-positive on $\mathrm{Im}(N)$. 
\end{proof}

\subsection{Projective degeneration} Our aim now is to prove that some
of the nilpotent orbits from Lemma \ref{lem_nilp_orb} are induced by
projective degenerations of $X$, i.e. obtained as the period map
for such degenerations.

Let us fix $N$ and $h$ as in Lemma \ref{lem_N}, and let $\Gamma\subset \mathrm{O}(V_\bbZ,q)$
be a torsion-free arithmetic subgroup. The following lemma is well-known
to the experts and admits many different proofs. For completeness we sketch
a proof via Hilbert schemes.

\begin{lem}\label{lem_family}
There exists a projective family $\varphi\colon \YY\to S$ of hyperk\"ahler
manifolds deformation equivalent to $X$ over a smooth quasi-projective base $S$,
and a commutative diagram
\begin{equation}\label{diagram}
\begin{tikzcd}[]
\tilde{S} \dar{q}\rar{\tilde{\rho}} & \DD^h\dar{p} \\
S \rar{\rho} & \DD^h/\Gamma
\end{tikzcd}
\end{equation}
In this diagram: $\tilde{S}$ is the universal covering of $S$, $\tilde{\rho}$
is the period map for the family $q^*\YY$, and $\rho$ is an \'etale
algebraic morphism, such that $S$ is a finite unramified covering of $\rho(S)$.
\end{lem}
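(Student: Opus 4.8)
The plan is to realise $S$ as a finite covering of a quasi-projective moduli space of polarised hyperk\"ahler manifolds built from a Hilbert scheme, and then to read off the diagram from the local and global Torelli theorems together with Borel's algebraicity theorem. First I would fix a base point: by surjectivity of the period map for marked hyperk\"ahler manifolds I choose a complex structure whose period is a very general point of $\DD^h$, so the associated manifold $X_0$ is deformation equivalent to $X$, has Picard group of rank one, and (for a suitable component of $\DD^h$) is projective with $h$ ample, by Huybrechts' projectivity criterion. Replacing $h$ by a large multiple --- which does not change $\DD^h$ --- I may assume $X_0$ is embedded projectively normally into $\bbP^N$ by the complete linear system of a line bundle with first Chern class $h$ and vanishing higher cohomology. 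I then take $U\subset\mathrm{Hilb}$ to be the locus of $[Z\subset\bbP^N]$ with $Z$ a smooth hyperk\"ahler manifold deformation equivalent to $X$, projectively normal, with vanishing higher cohomology of $\OO_Z(1)$ and with $c_1(\OO_Z(1))$ in the monodromy orbit of $h$; these are open conditions, so $U$ is quasi-projective, and it is smooth because the forgetful map to the deformation functor of the polarised manifold is smooth and polarised hyperk\"ahler deformations are unobstructed. Replacing $U$ by the connected component of $[X_0]$, the universal family restricts to a projective family $\pi\colon\UU\to U$ of hyperk\"ahler manifolds.

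Next I would pass to the moduli space. The group $\mathrm{PGL}_{N+1}$ acts on $U$ with finite stabilisers, since polarised hyperk\"ahler manifolds have finite automorphism groups, and as these are polarised varieties with trivial canonical bundle, all points of $U$ are GIT-stable; hence Viehweg's construction yields a quasi-projective coarse moduli space $M=U/\!\!/\mathrm{PGL}_{N+1}$ of $h$-polarised hyperk\"ahler manifolds deformation equivalent to $X$, of dimension $\dim U-\dim\mathrm{PGL}_{N+1}=b_2(X)-3$. Adding a level-$\ell$ structure with $\ell\ge 3$ rigidifies automorphisms, so after a finite \'etale cover one obtains a smooth quasi-projective variety carrying a genuine projective family; passing to a further finite \'etale cover I take $S$ so that the monodromy of $R^2\varphi_*\bbZ$ for the family $\varphi\colon\YY\to S$ --- which fixes $h$, as $\OO(1)$ is globally defined --- is contained in $\Gamma$, which is possible because the monodromy group and $\Gamma$ are commensurable arithmetic subgroups of $\rmO(V^h_\bbZ,q)$.

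Finally I would verify the diagram. Writing $\tilde S$ for the universal covering of $S$, the torsion-freeness of $\Gamma$ makes $\DD^h\to\DD^h/\Gamma$ a covering map, so the period map of $q^*\YY$ lifts to $\tilde\rho\colon\tilde S\to\DD^h$ and descends to $\rho\colon S\to\DD^h/\Gamma$. By the local Torelli theorem for hyperk\"ahler manifolds the differential of $\rho$ is injective, hence an isomorphism since $\dim S=b_2(X)-3=\dim\DD^h$; thus $\rho$ is \'etale and $\rho(S)$ is Zariski-open in $\DD^h/\Gamma$. Borel's extension theorem --- applicable because $\rho$ is a locally liftable horizontal map from a quasi-projective variety to an arithmetic quotient of a bounded symmetric domain --- shows $\rho$ extends over a smooth compactification of $S$, so $\rho$ is algebraic. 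By the global Torelli theorem of Verbitsky (with the refinements of Huybrechts and Markman), together with the fact that birational hyperk\"ahler manifolds sharing an ample class are isomorphic, an $h$-polarised hyperk\"ahler manifold deformation equivalent to $X$ is determined up to isomorphism by its period, so the period map on $M$ is injective and, being \'etale, an open immersion; composing with the finite \'etale cover $S\to M$, $\rho$ exhibits $S$ as a finite unramified covering of the open subvariety $\rho(S)$.

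The step I expect to be least routine is assembling the moduli-theoretic input: the GIT-stability of polarised varieties with trivial canonical class needed to make $M$ quasi-projective (which one may instead replace by the Baily-Borel theorem together with Borel's extension theorem applied directly to the period map), and --- if one wants $\rho(S)$ as large as possible --- the global Torelli theorem, which is precisely what upgrades the \'etale morphism $\rho$ to one that is generically injective; without it, Zariski's main theorem still lets one shrink $S$ so that $\rho$ becomes a finite unramified covering onto its open image. The Hilbert-scheme bookkeeping, the unobstructedness input, and the local Torelli computation are standard.
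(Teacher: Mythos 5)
Your argument is correct in substance but takes a genuinely different, and considerably heavier, route than the paper. The paper never forms a moduli space: it takes the smooth locus $S'$ of (the reduction of) a Hilbert scheme component containing the chosen embedded deformation $Y$, passes to the finite cover $S''$ corresponding to $\mu^{-1}(\Gamma)\subset\pi_1(S')$ so that the period map descends to $\rho''\colon S''\to\DD^h/\Gamma$, proves $\rho''$ is dominant by mapping the polydisc base of the universal deformation of the polarized pair $(Y,L)$ into $S''$ and using local Torelli, and then simply cuts: it chooses a locally closed subvariety of $S''$ of dimension equal to $\dim \DD^h/\Gamma$ on which $\rho''$ is \'etale and finite over its (open) image. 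This avoids GIT, Viehweg's quasi-projectivity theorem, level structures, and the global Torelli theorem altogether; the only nontrivial inputs are surjectivity of the period map, Huybrechts' projectivity criterion, local Torelli, and Baily--Borel. Your route buys a more canonical $S$ (a finite cover of an actual moduli space, with $\rho(S)$ as large as possible), but two steps of your main line need care: (i) for some deformation types (e.g.\ generalized Kummer type) there exist automorphisms acting trivially on $H^2$, so a level-$\ell$ structure on $H^2$ does not rigidify the moduli problem and the existence of a genuine universal family over your level cover requires an additional argument; (ii) the polarized global Torelli statement concerns the quotient of $\DD^h$ by the polarized monodromy group, not by an arbitrary torsion-free arithmetic $\Gamma\subset\mathrm{O}(V_\bbZ,q)$, so the injectivity claim must be formulated with the correct group before composing with the cover $S\to M$. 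You correctly observe at the end that both points can be bypassed by shrinking $S$ and invoking generic \'etaleness and finiteness of a dominant map between varieties of equal dimension --- which is precisely what the paper does --- and with that fallback your proof is complete.
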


\begin{proof} Surjectivity of the period map for hyperk\"ahler manifolds implies
that we can find a deformation of $X$ whose period is contained in $\DD^h$. Let $Y$
be such a deformation. We can also assume that the Picard group of $Y$ has rank one.
We fix an isomorphism $H^2(Y,\bbZ)\simeq V_\bbZ$, so that $h$ generates $\mathrm{Pic}(Y)$.

The manifold $Y$ is projective by \cite[Theorem 3.11]{Hu1} (see erratum to that paper for the correct proof),
and after replacing $h$ by its
multiple we can assume that $h = [L]$ for a very ample line bundle $L\in\mathrm{Pic}(Y)$,
such that $H^i(Y,L) = 0$ for all $i>0$. Let $W = H^0(Y,L)$ and
consider the embedding of $Y$ into $\bbP(W^*)$. This determines a $\bbC$-point $[Y]$ in the
Hilbert scheme $\mathrm{Hilb}(\bbP(W^*)/\bbC)$. Let $\HH$ be an irreducible component containing
this point, and $\phi\colon \YY\to \HH$ the universal family. Let $S'\subset \HH^{\mathrm{red}}$
be the maximal Zariski-open subset over which the restriction of $\phi$ to $\HH^{\mathrm{red}}$ is smooth;
it is non-empty since it contains the point $[Y]$. The family $\YY$ induces a variation of
Hodge structures on the local system $\VV = (R^2\phi_*\bbZ)_{\mathrm{pr}}$ with fibre $V_\bbZ^h$ over $S'$.

Consider the universal covering $q'\colon \tilde{S'}\to S'$. The pull-back $q'^*\VV$ induces
the period map $\rho'\colon \tilde{S'}\to \DD^h$ and the monodromy representation
$\mu\colon \pi_1(S')\to \mathrm{O}(V_\bbZ^h,q)$. Consider the group $\Gamma' = \mu^{-1}(\Gamma)$.
It is of finite index in $\pi_1(S')$ and we have the corresponding finite covering $S''$.
By construction, the morphism $\tilde{\rho}'$ descends to $\rho''\colon S''\to \DD^h/\Gamma$.

We claim that $\rho''$ is dominant. To see this, one can consider the universal deformation
of the pair $(Y,L)$. By the local Torelli theorem the base of this deformation is an open
subset $B\subset \DD^h$ isomorphic to a polydisc. The universal family over $B$ induces, possibly after
shrinking $B$, a morphism $B\to S'$ that lifts to $B\to S''$ since $B$ is simply-connected.
The composition with $\rho''$ gives a map $B\to \DD^h/\Gamma$, that by construction
equals the composition $B\hrarr \DD^h\to \DD^h/\Gamma$. The image of $\rho''$ thus
contains an open subset of $\DD^h/\Gamma$, so $\rho''$ is dominant.

Finally, to construct $S$ that satisfies all the conditions of the lemma, one can take
a subvariety of $S''$ that has the same dimension as $\DD^h/\Gamma$ and maps dominantly to it. Restriction of $\rho''$
to some open subset of such subvariety will be \'etale. By shrinking this open subset further
we can make $\rho''$ finite over its image. We let $S$ be the resulting locally closed subvariety of $S''$.
The preimage $q'^{-1}(S)$ is a covering of $S$. We can replace it by the universal covering $\tilde{S}$,
and the period map $\rho$ then factors through $q'^{-1}(S)$.
\end{proof}

\begin{thm}\label{thm_existence}
Let $X$ be a simple hyperk\"ahler manifold with $b_2(X)\ge 5$. Then there exists
a projective degeneration of $X$ with maximal unipotent monodromy. The limit
mixed Hodge structures on all cohomology groups of this degeneration are of Hodge-Tate type.
\end{thm}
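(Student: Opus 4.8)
The plan is to build, from the data of Lemmas~\ref{lem_N}, \ref{lem_nilp_orb} and~\ref{lem_family}, a maximally unipotent projective degeneration of $X$, and then to obtain the statement about the limit mixed Hodge structures directly from Theorem~\ref{thm_main}. First I would fix $h\in V_\bbZ$ and a nilpotent $N\in\frso(V^h,q)$ as in Lemma~\ref{lem_N} — the only place the hypothesis $b_2(X)\ge 5$ is used — so that $N^2\neq 0$, $N^3=0$, and $\mathrm{Im}(N^2)=\langle v_0\rangle$ is the one-dimensional kernel of $q|_{\mathrm{Im}(N)}$. Taking $N$ integral and replacing it by a positive integer multiple, which does not affect the conditions $N^2\neq 0$ and $N^3=0$, I may assume $e^N$ lies in the torsion-free arithmetic group $\Gamma$ of Lemma~\ref{lem_family}, so that for a point $x\in\hat{\DD}^h$ defining a nilpotent orbit (Lemma~\ref{lem_nilp_orb}) the orbit descends, near the cusp, to a holomorphic map $\phi\colon\Delta^*\to\DD^h/\Gamma$ sending $e^{2\pi\ii z}$ to $p(e^{zN}x)$, where $p\colon\DD^h\to\DD^h/\Gamma$ is the quotient map.

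Next I would match such an orbit with the period map $\rho\colon S\to\DD^h/\Gamma$ of the family $\varphi\colon\YY\to S$ from Lemma~\ref{lem_family}. Since $\rho$ is \'etale and dominant, $\rho(S)$ is open and dense, so $Z:=(\DD^h/\Gamma)\setminus\rho(S)$ is a proper closed algebraic subset. The base point $x$ should be chosen, using the openness in Lemma~\ref{lem_nilp_orb} and the density of $\rho(S)$, so that the image of $\phi$ is not contained in $Z$; concretely one can take $x=e^{-\ii yN}x'$ for a point $x'\in p^{-1}(\rho(S))\subset\DD^h$ with $N^2x'\neq 0$ (possible since $p^{-1}(\rho(S))$ is a nonempty open subset of $\DD^h$ and $\{v\in\DD^h\st N^2v=0\}$ is a proper closed subset) and $y\gg 0$, so that the orbit meets $p^{-1}(\rho(S))$. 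Since $\phi$ extends holomorphically over the puncture, into the Baily-Borel compactification of $\DD^h/\Gamma$, and $Z$ is algebraic, the set $\phi^{-1}(Z)$ is finite, and after shrinking the disc $\phi(\Delta^*)\subset\rho(S)$.

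With this in hand, I realize the orbit geometrically. Pulling back $\varphi\colon\YY\to S$ along a connected component $C$ of $S\times_{\rho(S)}\Delta^*\to\Delta^*$, which — as $S$ is a finite unramified covering of $\rho(S)$ — is isomorphic to $\Delta^*$ mapping to $\Delta^*$ by $w\mapsto w^m$, yields a smooth projective family $\YY_C\to C$ with fibres deformation-equivalent to $X$; by the commutative square~\eqref{diagram} its period map is a restriction of the nilpotent orbit, so its monodromy on $H^2$ is $(e^N)^m=e^{mN}$. The family $\YY_C\to C$ is projective, hence embeds in $\bbP^M\times\Delta^*$ for some $M$; the resulting map $\Delta^*\to\mathrm{Hilb}(\bbP^M)$ lands in a projective component and extends, by properness, to a map $\Delta\to\mathrm{Hilb}(\bbP^M)$, and pulling back the universal family produces a flat proper projective morphism $\pi\colon\XX\to\Delta$ that is smooth over $\Delta^*$, has fibres over $\Delta^*$ deformation-equivalent to $X$, and has monodromy $e^{mN}$ on $H^2$. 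Since $(mN)^2=m^2N^2\neq 0$ and $(mN)^3=0$, this is a maximally unipotent projective degeneration of $X$ in the sense of Definition~\ref{def_deg}.

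Applying Theorem~\ref{thm_main} to $\pi\colon\XX\to\Delta$ then shows that the limit mixed Hodge structures on $H^k(X,\bbQ)$ are of Hodge-Tate type for all $k$, which is the assertion. I expect the two delicate points to be, first, the genericity input — guaranteeing that a suitable nilpotent orbit meets $\rho(S)$ along a full punctured neighbourhood of the cusp, which rests on the openness statements above together with the explicit description of $N$ and of the orbit near the cusp — and, second, the passage from the analytic family over the punctured disc to a genuine flat proper degeneration over $\Delta$ via the properness of the Hilbert scheme; the remaining verifications (that rescaling $N$ is harmless, that the monodromy of the pulled-back family is the $m$-th power $e^{mN}$, and that the extended family satisfies all the conditions of Definition~\ref{def_deg}) are routine.
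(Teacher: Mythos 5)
Your proposal follows the paper's strategy step for step: choose $N$ and $h$ via Lemma~\ref{lem_N}, rescale $N$ so that $e^N\in\Gamma$, pick a base point for which the nilpotent orbit exists (Lemma~\ref{lem_nilp_orb}) and meets $p^{-1}(\rho(S))$, descend to a punctured disc in $\DD^h/\Gamma$, shrink it to avoid the complement of $\rho(S)$, lift through the finite covering $S\to\rho(S)$, pull back the family of Lemma~\ref{lem_family}, extend over the puncture, and conclude with Theorem~\ref{thm_main}. Two points need repair, one of them genuine.

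The genuine gap is the final extension step: you assert that the classifying map $\Delta^*\to\mathrm{Hilb}(\bbP^M)$ ``extends, by properness, to a map $\Delta\to\mathrm{Hilb}(\bbP^M)$''. Properness of the Hilbert scheme gives the valuative criterion, which extends maps that are already meromorphic at $0$; it says nothing about an arbitrary holomorphic map from a punctured disc into a projective variety, which may have an essential singularity (for instance the Legendre family with $\lambda(t)=e^{1/t}$ is a smooth projective family over $\Delta^*$ whose classifying map does not extend). What is needed is precisely the Borel--Kobayashi extension theorem: one first extends $\alpha'\colon\Delta^*\to S$ to a morphism $\Delta\to\bar S$ into a projective compactification (using that the composition with the period map extends to the Baily--Borel compactification and that $S\to\rho(S)$ is finite unramified), and only then is the composition with the algebraic classifying map $S\to\mathrm{Hilb}(\bbP^M)$ meromorphic at $0$, so that properness can be invoked. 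This is exactly how the paper closes the argument, by pulling back a compactified family $\bar{\YY}\to\bar S$ along the extended map. A second, minor, point: in your ``concrete'' choice of base point, the condition $N^2x'\neq 0$ is not the nilpotent-orbit condition; by Lemma~\ref{lem_nilp_orb} you need $q(Nx',N\bar{x}')>0$, which is open but a priori strictly stronger. Your primary justification --- openness and non-emptiness of the nilpotent-orbit locus together with density of $p^{-1}(\rho(S))$ --- is the correct one and is what the paper uses, so the parenthetical should be dropped or corrected accordingly.
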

\begin{proof}
Consider the diagram \ref{diagram}.
Let $M$ be a smooth projective variety that contains $\DD^h/\Gamma$ as an open subset.
Let $U$ be the image of $\rho$ and $D$ the complement of $U$ inside $M$ with reduced scheme structure.

Let $N$ be a nilpotent endomorphism from Lemma \ref{lem_N}. After multiplying $N$
by a positive integer, we may assume that $\gamma = e^N\in \Gamma$. 
Let $\NN = \{ x\in \DD^h\st (N,x) \mbox{ defines a nilpotent orbit}\}$. It follows from Lemma \ref{lem_nilp_orb}
and the definition of the nilpotent orbit that $\NN$ is open and non-empty. Hence
there exists a point $x_0\in \NN\cap p^{-1}(U)$.

Let us consider the nilpotent orbit given by $(N,x_0)$. Choose $t_0 > 0$ as in Definition \ref{def_nilp},
and define $\tilde{\alpha}\colon \{z\in \bbC\st \mathrm{Im}(z)>t_0\}\to \DD^h$, $z\mapsto e^{zN}x_0$.
We have $\tilde{\alpha}(z+1) = \gamma \tilde{\alpha}(z)$, so $\tilde{\alpha}$ descends to a map
$\alpha\colon \Delta_\varepsilon^*\to \DD^h/\Gamma$, where
$\Delta_\varepsilon^*$ is the punctured disc of radius $\varepsilon = e^{-t_0}$. According to Borel,
we can extend $\alpha$ over the puncture and get a map $\Delta_\varepsilon\to M$. 

Note that the image of $\alpha$ is not contained in $D$ by the choice of $x_0$. Since $\alpha^{-1}(D)$
is an analytic subvariety of $\Delta_\varepsilon$, we can find $\varepsilon'\le\varepsilon$, such
that $\alpha^{-1}(D)\cap \Delta^*_{\varepsilon'} = \emptyset$. We get a map $\alpha\colon \Delta^*_{\varepsilon'}\to U$.
After passing to a finite unramified covering of $\Delta^*_{\varepsilon'}$ and rescaling the
coordinate on the disc, we obtain a map
$\alpha'\colon \Delta^*\to S$. We can find projective compactifications
$\bar{\YY}$ and $\bar{S}$ of $\YY$ and $S$ from Lemma \ref{lem_family}, such that
$\phi$ extends to $\bar{\phi}\colon\bar{\YY}\to\bar{S}$. According to Borel and Kobayashi,
$\alpha'$ extends to a morphism $\Delta\to \bar{S}$. Then the pull-back
of $\bar{\YY}$ by $\alpha'$ defines a projective degeneration over $\Delta$.
The assertion about the limit mixed Hodge structures follows from Theorem \ref{thm_main}.
\end{proof}

\section*{Acknowledgements}

I am grateful to Misha Verbitsky and Daniel Hyubrechts for useful discussions and the reference \cite{To},
and to the referee for valuable comments and suggestions.

\end{document}